\newtheorem{theorem}{Theorem}[section] %[section] here insures the start of each section has sets the theorem counter back to 1. 
\newtheorem{corollary}[theorem]{Corollary}
\newtheorem{conjecture}[theorem]{Conjecture}
\newtheorem{proposition}[theorem]{Proposition}
\theoremstyle{definition}
\newtheorem{example}[theorem]{Example}
\theoremstyle{definition}
\newtheorem{definition}[theorem]{Definition}
\theoremstyle{plain}
\theoremstyle{definition}
\newtheorem{remark}[theorem]{Remark}
\theoremstyle{definition}
\newtheorem{question}[theorem]{Question}
\DeclareMathOperator{\spec}{\texttt{spec}}
\renewcommand{\Re}{\operatorname{Re}}
\renewcommand{\Im}{\operatorname{Im}}
\numberwithin{figure}{section}
\numberwithin{table}{section}
\numberwithin{theorem}{section}
\numberwithin{equation}{section}
\date{}
\title{\textcolor{red}{\textbf{Tridiagonal real symmetric matrices with a connection to Pascal's triangle and the Fibonacci sequence}}}
\author{
	Emily Gullerud\\
  \href{mailto:gulle069@umn.edu}{\nolinkurl{gulle069@umn.edu}}
  \and
	Rita Johnson\\
  \href{mailto:rita.laraine.johnson@gmail.com}{\nolinkurl{rita.laraine.johnson@gmail.com}}
  \and
  aBa Mbirika\thanks{Corresponding author}\\
  \href{mailto:mbirika@uwec.edu}{\nolinkurl{mbirika@uwec.edu}}
}
\newcommand{\nocontentsline}[3]{}
\newcommand{\tocless}[2]{\bgroup\let\addcontentsline=\nocontentsline#1{#2}\egroup}
\newcommand\ackname{Acknowledgments}
  \newenvironment{acknowledgments}{%
      \titlepage
      \null\vfil
      \@beginparpenalty\@lowpenalty
      \begin{center}%
        \bfseries \ackname
        \@endparpenalty\@M
      \end{center}}%
     {\par\vfil\null\endtitlepage}
\newcommand{\subjclass}[2][2020]{%
  \let\@oldtitle\@title%
  \gdef\@title{\@oldtitle\footnotetext{#1 \emph{Mathematics subject classification.} #2}}%
}
\newcommand{\keywords}[1]{%
  \let\@@oldtitle\@title%
  \gdef\@title{\@@oldtitle\footnotetext{\emph{Keywords.} #1.}}%
}
\keywords{tridiagonal matrix, characteristic polynomial, eigenvalue, Pascal's triangle, Fibonacci sequence}
\subjclass{Primary 05C50, 15A18; Secondary 65F15}
\begin{document}

\maketitle

\begin{abstract}
We explore a certain family $\{A_n\}_{n=1}^{\infty}$ of $n \times n$ tridiagonal real symmetric matrices. After deriving a three-term recurrence relation for the characteristic polynomials of this family, we find a closed form solution. The coefficients of these characteristic polynomials turn out to involve the diagonal entries of Pascal's triangle in a tantalizingly predictive manner. Lastly, we explore a relation between the eigenvalues of various members of the family. More specifically, we give a sufficient condition on the values $m,n \in \mathbb{N}$ for when $\spec(A_m)$ is contained in $\spec(A_n)$. We end the paper with a number of open questions, one of which intertwines our characteristic polynomials with the Fibonacci sequence in an intriguing manner involving ellipses.
\end{abstract}

%\newpage

\tableofcontents 

%%%%%%%%%%%%%%%%%%%%%%%%%%%%%%%%%%%%%%%%%%%%%%%%%%%
%%%%%%%%%%%%%%%%%%%%%%%%%%%%%%%%%%%%%%%%%%%%%%%%%%%
%%%%%%%%            SECTION 1             %%%%%%%%%
%%%%%%%%%%%%%%%%%%%%%%%%%%%%%%%%%%%%%%%%%%%%%%%%%%%
%%%%%%%%%%%%%%%%%%%%%%%%%%%%%%%%%%%%%%%%%%%%%%%%%%%

\section{Introduction}
Tridiagonal matrices arise in many science and engineering areas, for example in parallel computing, telecommunication system analysis, and in solving differential equations using finite differences~\cite{Bar-On1996, LuSun1997, Crybaby1972}. In particular, the eigenvalues of tridiagonal \textit{symmetric} matrices have been studied extensively starting with Golub in 1962~\cite{Golub1962}. Moreover, a search on MathSciNet reveals that over 100 papers with the words ``tridiagonal symmetric matrices'' in the title have been published since then.

Tridiagonal real symmetric matrices are a subclass of the class of real symmetric matrices. A \textit{real symmetric matrix} is a square matrix $A$ with real-valued entries that are symmetric about the diagonal; that is, $A$ equals its transpose $A^\mathrm{T}$. Symmetric matrices arise naturally in a variety of applications. Real symmetric matrices in particular enjoy the following two properties: (1) all of their eigenvalues are real, and (2) the eigenvectors corresponding to distinct eigenvalues are orthogonal. In this paper, we investigate the family of real symmetric $n \times n$ matrices of the form
$$
A_n = \begin{bmatrix} 
0 & 1 &  & \\
1 & \ddots & \ddots & \\
 & \ddots & \ddots & 1\\
 &  & 1 & 0
\end{bmatrix}
$$
with ones on the superdiagonal and subdiagonal and zeroes in every other entry. These matrices, in particular, arise as the adjacency matrices of the path graphs and hence are fundamental objects in the field of spectral graph theory. The three main results of this paper are
\begin{itemize}
\item \textbf{MAIN RESULT 1}: We give a closed form expression for the characteristic polynomials $f_n(\lambda)$ of the $A_n$ matrices and establish a three-term recurrence relation which the $f_n(\lambda)$ polynomials satisfy.
\item \textbf{MAIN RESULT 2}: We show that the coefficients of these $f_n(\lambda)$ polynomials have an intimate connection to the diagonals of Pascal's triangle.
\item \textbf{MAIN RESULT 3}: Utilizing a trigonometric closed form expression for the set of eigenvalues of the $A_n$ matrices, we give an upper bound on the spectral radius $\rho(A_n)$ of $A_n$ and provide a sufficiency condition on the values $m,n \in \mathbb{N}$ that guarantees $\spec(A_m) \subset \spec(A_n)$.
\end{itemize}

We were initially drawn to this work when noticing the first few characteristic polynomials of the $A_n$ matrices. For example, we give the first seven polynomials below.
$$
\begin{array}{|c||c c c c c|}
\hline
 n &&& \hspace{-.5in}f_n(\lambda) &&\\
\hline 
\hline
 1 & - \textcolor{cyan}{1}\lambda &&&&\\
 2 & \textcolor{cyan}{1}\lambda ^2 & - \textcolor{orange}{1} &&&\\
 3 & - \textcolor{cyan}{1}\lambda ^3 & + \textcolor{orange}{2} \lambda  &&&\\
 4 & \textcolor{cyan}{1}\lambda ^4 & - \textcolor{orange}{3} \lambda ^2 & + \textcolor{violet}{1} &&\\
 5 & - \textcolor{cyan}{1}\lambda ^5 & + \textcolor{orange}{4} \lambda ^3 & - \textcolor{violet}{3} \lambda &&\\
 6 & \textcolor{cyan}{1}\lambda ^6 & - \textcolor{orange}{5} \lambda ^4 & + \textcolor{violet}{6} \lambda ^2 & - \textcolor{teal}{1} &\\
 7 & - \textcolor{cyan}{1}\lambda ^7 & + \textcolor{orange}{6} \lambda ^5 & - \textcolor{violet}{10} \lambda ^3 & + \textcolor{teal}{4} \lambda  &\\
\hline
\end{array}
$$
Color-coding the coefficients to make the connection more transparent, we see that the four columns of coefficients (up to absolute value) correspond directly to the the first four diagonals in Pascal's triangle. We used this observation to conjecture and eventually prove the following formula for the $m^{\text{th}}$ characteristic polynomial:
$$f_m(\lambda) = \sum_{i=0}^{\left \lfloor{\frac{m}{2}}\right\rfloor } (-1)^{m+i} \binom{m-i}{i} \lambda^{m-2i}.$$
We prove that this formula satisfies the three-term recurrence relation
$$f_n(\lambda) = -\lambda f_{n-1}(\lambda) - f_{n-2}(\lambda)$$
with initial conditions $f_1(\lambda) = -\lambda$ and $f_2(\lambda) = \lambda^2 - 1$, thereby establishing our first main result. Our second main result uses a closed form for the eigenvalues of the matrix $A_n$ to explore some properties of the spectrum of $A_n$. Our third main result was motivated by an observation that the four distinct eigenvalues of the matrix $A_4$ are the golden ratio, its reciprocal, and their additive inverses. Furthermore via \texttt{Mathematica}, we observed that these four eigenvalues appear in the spectrum of the $A_n$ matrices for the $n$-values 4, 9, 14, 19, 24, 29, 34, 39, and 44. These numbers all being congruent to 4 modulo 5 seemed to be no accident. Motivated by this tantalizing observation, we use a closed form expression
$$\lambda_s = 2\cos\left( \frac{s \pi}{m+1} \right) \text{ for } s=1,\ldots,n$$
for the $n$ distinct eigenvalues of each $A_n$ matrix to give a sufficiency criterion for when the eigenvalues of the matrix $A_m$ are also eigenvalues of the matrix $A_n$, thereby establishing our third main result.

The breakdown of the paper is as follows. In Section~\ref{sec:prelims}, we give some preliminaries and definitions. In Section~\ref{sec:char_polys}, we focus on the characteristic polynomials of the $A_n$ matrices; in particular,
\begin{enumerate}[\hspace{.5cm}(1)]
\item we derive a three-term recurrence relation for the characteristic polynomials of the family of matrices $\{A_n\}_{n=1}^\infty$ in Subsection~\ref{subsec:recurrence_relation},
\item we unveil a tantalizing connection between the coefficients of the family of characteristic polynomials $\{f_n(\lambda)\}_{n=1}^\infty$ of the matrices $\{A_n\}_{n=1}^\infty$ and the diagonal columns of Pascal's triangle in Subsection~\ref{subsec:char_poly_and_Pascal},
\item we provide a closed form expression for $f_n(\lambda)$ and prove that this closed form satisfies the recurrence relation in Subsection~\ref{subsec:closed_form_of_char_poly},
\item we give a parity property of each $f_n(\lambda)$ dependent on the parity of $n$ in Subsection~\ref{subsec:some_properties}, and
\item we give a connection between our family of characteristic polynomials $\{f_n(\lambda)\}_{n=1}^\infty$ and Chebyshev polynomials of the second kind in Subsection~\ref{subsec:Chebyshev}.
\end{enumerate}
In Section~\ref{sec:spectrum}, we focus on the spectrum of the $A_n$ matrices; in particular,
\begin{enumerate}[\hspace{.5cm}(1)]
\setcounter{enumi}{5}
\item we prove that a given trigonometric closed form yields the eigenvalues of each matrix $A_n$ in Subsection~\ref{subsec:closed_form_for_roots},
\item we investigate the spectral radius of the matrix $A_n$ and in particular give lower and upper bounds of $\spec(A_n)$ in Subsection~\ref{subsec:spectral_radius}, and
\item we prove a sufficiency condition on the values $m,n \in \mathbb{N}$ that guarantees the containment $\spec(A_m) \subset \spec(A_n)$ in Subsection~\ref{subsec:suff_cond}.
\end{enumerate}
Finally in Section~\ref{sec:open_questions}, we provide some open questions and, in particular, explore an intriguing connection between the $f_n(\lambda)$ polynomials and the Fibonacci sequence.

%%%%%%%%%%%%%%%%%%%%%%%%%%%%%%%%%%%%%%%%%%%%%%%%%%%
%%%%%%%%%%%%%%%%%%%%%%%%%%%%%%%%%%%%%%%%%%%%%%%%%%%
%%%%%%%%            SECTION 2             %%%%%%%%%
%%%%%%%%%%%%%%%%%%%%%%%%%%%%%%%%%%%%%%%%%%%%%%%%%%%
%%%%%%%%%%%%%%%%%%%%%%%%%%%%%%%%%%%%%%%%%%%%%%%%%%%

\section{Preliminaries and definitions}\label{sec:prelims}

Let us recall some fundamental linear algebra definitions used in this paper. Since our main results revolve around eigenvalues, we start with this definition and establish notation.
\begin{definition}[Eigenvalue and eigenvector]
Let $A \in \mathbb{C}^{n \times n}$ be a square matrix. An \textit{eigenpair} of $A$ is a pair $(\lambda, \vec{v}) \in \mathbb{C} \times (\mathbb{C}^n-{\vec{0}})$ such that $A \vec{v} = \lambda \vec{v}$. We call $\lambda$ an \textit{eigenvalue} and its corresponding nonzero vector $\vec{v}$ an \textit{eigenvector}.
\end{definition}
In particular, we look at the set of eigenvalues of a given matrix $A$ and the largest element up to absolute value of these eigenvalues defined as follows.
\begin{definition}[Spectrum and spectral radius]
Let $A \in \mathbb{C}^{n \times n}$ be a square matrix. The multiset of eigenvalues of $A$ is called the \textit{spectrum} of $A$ and is denoted $\spec(A)$. The \textit{spectral radius} of $A$ is denoted $\rho(A)$ and defined to be
$$\rho(A) = \max\{|\lambda| : \lambda \in \spec(A) \}.$$
\end{definition}

The matrices we care about in this paper are a subset of a wider class of symmetric matrices called Toeplitz matrices. A \textit{Toeplitz matrix} is a matrix of the form
$$
\begin{bmatrix} 
  a_{0} & a_{1} & a_{2} & \ldots & \ldots &a_{n-1} \\
  a_{1} & a_0 & a_{1} & \ddots & & \vdots \\
  a_{2} & a_{1} & \ddots & \ddots & \ddots& \vdots \\
  \vdots & \ddots & \ddots & \ddots & a_{1} & a_{2}\\
  \vdots & & \ddots & a_{1} & a_{0} & a_{1} \\
  a_{n-1} & \ldots & \ldots & a_{2} & a_{1} &a_{0} 
\end{bmatrix}.
$$
In particular, we focus on a subclass of Toeplitz matrices called tridiagonal symmetric matrices. We give the specific definitions below.
\begin{definition}[Tridiagonal matrix]
A \textit{tridiagonal symmetric matrix} is a Toeplitz matrix in which all entries not lying on the diagonal, superdiagonal, or subdiagonal are zero.
\end{definition}
We limit our perspective by considering the tridiagonal matrices of the following form.
\begin{definition}[The matrix $A_n$]
Let $A_n$ be an $n \times n$ tridiagonal symmetric matrix in which the diagonal entries are zero, and the superdiagonal and subdiagonal entries are all one.
\end{definition}

\begin{example}
Here are the $A_n$ matrices for $n=1,\ldots,4$.
$$
A_1 = 
\begin{bmatrix} 
0
\end{bmatrix}
\hspace{.35in}
A_2 = 
\begin{bmatrix} 
0 & 1\\
1 & 0
\end{bmatrix}
\hspace{.35in}
A_3 = 
\begin{bmatrix} 
0 & 1 & 0\\
1 & 0 & 1\\
0 & 1 & 0
\end{bmatrix}
\hspace{.35in}
A_4 = 
\begin{bmatrix} 
0 & 1 & 0 & 0\\
1 & 0 & 1 & 0\\
0 & 1 & 0 & 1\\
0 & 0 & 1 & 0
\end{bmatrix}
$$
\end{example}
A typical method of finding the eigenvalues of a square matrix is by calculating the roots of the \textit{characteristic polynomial} of the matrix. 
\begin{definition}[The characteristic polynomial $f_n(\lambda)$]
Given the matrix $A_n$, the \textit{characteristic polynomial} $f_n(\lambda)$ is the determinant of the matrix $A_n - \lambda I_n$; that is, $f_n(\lambda) = |A_n - \lambda I_n|$.
\end{definition}

\begin{remark}
If we set $f_n(\lambda) = 0$, then clearly the $n$ roots (i.e., eigenvalues) of this characteristic equation give the spectrum of $A_n$. Moreover, these eigenvalues are real since $A_n$ is a real symmetric matrix~\cite[Fact~1-2]{Parlett1980}, and these eigenvalues are distinct since the subdiagonal and superdiagonal entries of $A_n$ are nonzero~\cite[Lemma~7-7-1]{Parlett1980}. In particular, $A_n$ has $n$ distinct real eigenvalues.
\end{remark}

\begin{example}\label{exam:golden_ratio}
Consider the matrix $A_4$ and the determinant $|A_4 - \lambda I_4|$, which gives the characteristic polynomial $f_4(\lambda)$.
$$
A_4 = 
\begin{bmatrix} 
0 & 1 & 0 & 0\\
1 & 0 & 1 & 0\\
0 & 1 & 0 & 1\\
0 & 0 & 1 & 0
\end{bmatrix}
\hspace{.65in}
|A_4 - \lambda I_4| = \begin{vmatrix} 
-\lambda & 1 & 0 & 0\\
1 & -\lambda & 1 & 0\\
0 & 1 & -\lambda & 1\\
0 & 0 & 1 & -\lambda
\end{vmatrix} = \lambda^4 - 3 \lambda^2 + 1.
$$
Thus the characteristic polynomial is $f_4(\lambda) = \lambda^4 - 3 \lambda^2 + 1$. The roots of the corresponding characteristic equation $f_4(\lambda) = 0$ yield the four distinct eigenvalues
\begin{align*}
\lambda_1 &=\frac{1+\sqrt{5}}{2} = \phi \approx 1.61803 & \lambda_3 &=\frac{-1+\sqrt{5}}{2} = \frac{1}{\phi} \approx .61803\\
\lambda_2 &=\frac{1-\sqrt{5}}{2} = -\frac{1}{\phi} \approx -.61803 & \lambda_4 &=\frac{-1-\sqrt{5}}{2} = -\phi \approx -1.61803,
\end{align*}
where $\phi$ is the golden ratio. Moreover in Subsection~\ref{subsec:suff_cond}, we determine an infinite set of $n$ values for which $\spec(A_4) \subset \spec(A_n)$.
\end{example}

%%%%%%%%%%%%%%%%%%%%%%%%%%%%%%%%%%%%%%%%%%%%%%%%%%%
%%%%%%%%%%%%%%%%%%%%%%%%%%%%%%%%%%%%%%%%%%%%%%%%%%%
%%%%%%%%            SECTION 3             %%%%%%%%%
%%%%%%%%%%%%%%%%%%%%%%%%%%%%%%%%%%%%%%%%%%%%%%%%%%%
%%%%%%%%%%%%%%%%%%%%%%%%%%%%%%%%%%%%%%%%%%%%%%%%%%%

\section{The characteristic polynomial \texorpdfstring{$f_n(\lambda)$}{} of the matrix \texorpdfstring{$A_n$}{the A sub n matrix}}\label{sec:char_polys}
In this section, we find the family of characteristic polynomials $\{f_n(\lambda)\}_{n=1}^{\infty}$ corresponding to the family of matrices $\{A_n\}_{n=1}^\infty$. For each $n \in \mathbb{N}$, the coefficients of the polynomial $f_n(\lambda)$ reveal themselves to be exactly the entries of the $(n+1)^{\mathrm{th}}$ diagonal of Pascal's triangle as denoted in Figure~\ref{fig:pascalTri}. We use this observation to find a closed form expression for $f_n(\lambda)$ for each $n$. To confirm this expression is indeed correct, we show that it satisfies the easily derived three-term recurrence relation that the sequence $\{f_n(\lambda)\}_{n=1}^{\infty}$ must follow.

\subsection{A recurrence relation for \texorpdfstring{$f_n(\lambda)$}{the characteristic polynomial}}\label{subsec:recurrence_relation}

In this subsection, we show that the determinants of the matrices $A_n - \lambda I_n$, that is, the characteristic polynomials $f_n(\lambda)$, satisfy the following three-term recurrence relation for all $n \geq 3$:
\begin{equation}\label{eqn:recurrence_relation}
f_n(\lambda) = -\lambda f_{n-1}(\lambda) - f_{n-2}(\lambda)
\end{equation}
with initial conditions $f_1(\lambda) = -\lambda$ and $f_2(\lambda) = \lambda^2 -1$. First we consider the $n \times n$ matrix $A_n-\lambda I_n$ and note that bottom-right $(n-1) \times (n-1)$ submatrix is $A_{n-1} -\lambda I_{n-1}$, and hence the minor given by deletion of the first row and first column is the characteristic polynomial $f_{n-1}(\lambda)$. This fact is pivotal in deriving the recurrence relation $f_n(\lambda)$.
\newline

\begin{tikzpicture}
\hspace{-2cm}
$A_n-\lambda I_n=$
\hspace{2.7cm}
        \matrix [matrix of math nodes,left delimiter={[},right delimiter={]}] (m)
        {
            -\lambda & 1 & 0 & \cdots & 0\\               
            1 & -\lambda & 1 & \ddots & 0 \\         
            0 & 1 & -\lambda & \ddots & \vdots\\
						\vdots & \ddots & \ddots & \ddots & 1\\
						0 & 0 & \cdots & 1 & -\lambda\\						
        };    

 \hspace{2.7cm}$=$
\hspace{2.85cm}
        \matrix [matrix of math nodes,left delimiter={[},right delimiter={]}] (m)
        {
            -\lambda &1 & 0 & \ldots & 0\\               
            1 & \phantom{x} &\phantom{x} & \phantom{x} & \phantom{x} \\         
            0 & \phantom{x} & A_{n-1} &\hspace{-.1in}-\lambda I_{n-1} & \phantom{x}\\
						\vdots & \phantom{x}& \phantom{x} &\phantom{x} & \phantom{x}\\
						0 & \phantom{x} & \phantom{x} & \phantom{x} & \phantom{x}\\						
        };    
			\draw[color=red] (m-2-2.north west) -- (m-2-5.north east) -- (m-5-5.south east) -- (m-5-2.south west) -- (m-2-2.north west);
\end{tikzpicture}

\noindent Computing the determinant by cofactor expansion along the top row of $A_n - \lambda I_n$, we derive the following.	

\begingroup
\allowdisplaybreaks
\begin{align*}
f_n(\lambda)&= |A_n - \lambda I_n| \\
&=-\lambda \cdot |A_{n-1}-\lambda I_{n-1}|-1\cdot
\begin{vmatrix}
						1 & 1 & 0  &\cdots & 0\\
						0 & -\lambda & 1 & \ddots & 0\\               
            0 & 1 & -\lambda & \ddots & \vdots \\         
						\vdots & \ddots & \ddots & \ddots & 1\\
						0 & 0 & \cdots & 1 & -\lambda\\		
\end{vmatrix}\\
&=-\lambda \cdot f_{n-1}(\lambda)-1\cdot
\begin{tikzpicture}[baseline=-0.5ex]
 \matrix [matrix of math nodes,left delimiter=|,right delimiter=|,ampersand replacement=\&] (m)
        {
            1 \& 1 \& 0 \& \ldots \& 0\\               
            0 \& \phantom{x} \&\phantom{x} \& \phantom{x} \& \phantom{x} \\         
            \vdots \& \phantom{x} \& A_{n-2} \&\hspace{-.1in}-\lambda I_{n-2} \& \phantom{x}\\
						\vdots \& \phantom{x} \& \phantom{x} \& \phantom{x} \& \phantom{x}\\
						0 \& \phantom{x} \& \phantom{x} \& \phantom{x} \& \phantom{x}\\						
        };     
			\draw[color=red] (m-2-2.north west) -- (m-2-5.north east) -- (m-5-5.south east) -- (m-5-2.south west) -- (m-2-2.north west);
\end{tikzpicture}\\
&=-\lambda \cdot f_{n-1}(\lambda) - f_{n-2}(\lambda).
\end{align*}%
\endgroup

\subsection{The \texorpdfstring{$f_n(\lambda)$}{characteristic polynomial} and their connection to Pascal's triangle}\label{subsec:char_poly_and_Pascal}

In Table~\ref{table:char_polys}, we give the characteristic polynomials $f_n(\lambda)$ for $n=1,\ldots,12$. Upon examination of the coefficients of these twelve polynomials, it becomes immediately apparent that the binomial coefficients in the diagonal columns of Pascal's triangle are intimately involved with the coefficients of the $f_n(\lambda)$. In Figure~\ref{fig:pascalTri}, we give the first twelve rows of Pascal's triangle, carefully coding the diagonal columns, to match the table of the twelve $f_n(\lambda)$ polynomials with their corresponding colors matching those in Pascal's triangle.

\begin{table}[h!]
$$
\begin{array}{|c||c c c c c c c|}
\hline
 n &&& f_n(\lambda) &&&&\\
\hline 
\hline
 1 & - \textcolor{cyan}{1}\lambda &&&&&&\\
 2 & \textcolor{cyan}{1}\lambda ^2 & - \textcolor{orange}{1} &&&&&\\
 3 & - \textcolor{cyan}{1}\lambda ^3 & + \textcolor{orange}{2} \lambda  &&&&&\\
 4 & \textcolor{cyan}{1}\lambda ^4 & - \textcolor{orange}{3} \lambda ^2 & + \textcolor{violet}{1} &&&&\\
 5 & - \textcolor{cyan}{1}\lambda ^5 & + \textcolor{orange}{4} \lambda ^3 & - \textcolor{violet}{3} \lambda &&&&\\
 6 & \textcolor{cyan}{1}\lambda ^6 & - \textcolor{orange}{5} \lambda ^4 & + \textcolor{violet}{6} \lambda ^2 & - \textcolor{teal}{1} &&&\\
 7 & - \textcolor{cyan}{1}\lambda ^7 & + \textcolor{orange}{6} \lambda ^5 & - \textcolor{violet}{10} \lambda ^3 & + \textcolor{teal}{4} \lambda  &&&\\
 8 & \textcolor{cyan}{1}\lambda ^8 & - \textcolor{orange}{7} \lambda ^6 & + \textcolor{violet}{15} \lambda ^4 & - \textcolor{teal}{10} \lambda ^2 & + \textcolor{magenta}{1} &&\\
 9 & - \textcolor{cyan}{1}\lambda ^9 & + \textcolor{orange}{8} \lambda ^7 & - \textcolor{violet}{21} \lambda ^5 & + \textcolor{teal}{20} \lambda ^3 & - \textcolor{magenta}{5} \lambda &&\\
 10 & \textcolor{cyan}{1}\lambda ^{10} & - \textcolor{orange}{9} \lambda ^8 & + \textcolor{violet}{28} \lambda ^6 & - \textcolor{teal}{35} \lambda ^4 & + \textcolor{magenta}{15} \lambda ^2 & - \textcolor{green}{1} &\\
 11 & - \textcolor{cyan}{1}\lambda ^{11} & + \textcolor{orange}{10} \lambda ^9 & - \textcolor{violet}{36} \lambda ^7 & + \textcolor{teal}{56} \lambda ^5 & - \textcolor{magenta}{35} \lambda ^3 & + \textcolor{green}{6} \lambda &\\
 12 & \textcolor{cyan}{1}\lambda ^{12} & - \textcolor{orange}{11} \lambda ^{10} & + \textcolor{violet}{45} \lambda ^8 & - \textcolor{teal}{84} \lambda ^6 & + \textcolor{magenta}{70} \lambda ^4 & - \textcolor{green}{21} \lambda ^2 & + \textcolor{blue}{1}\\
\hline
\end{array}
$$
\caption{The first twelve characteristic polynomials $f_n(\lambda)$}
\label{table:char_polys}
\end{table}

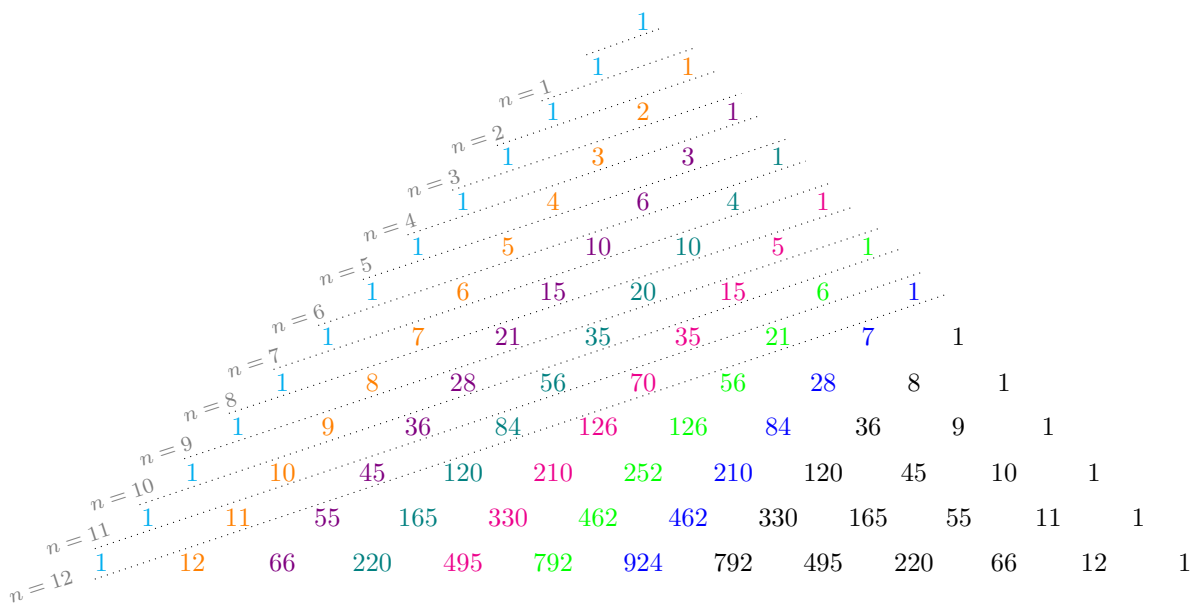
\begin{figure}[h!]
\centering
\begin{tikzpicture}[scale=.9]
\node at (0,8) {\small{\textcolor{cyan}{1}}};
\node at (-0.665,7.33) {\small{\textcolor{cyan}{1}}};
\node at (0.665,7.33) {\small{\textcolor{orange}{1}}};
\node at (-1.33,6.66) {\small{\textcolor{cyan}{1}}};
\node at (0,6.66) {\small{\textcolor{orange}{2}}};
\node at (1.33,6.66) {\small{\textcolor{violet}{1}}};
\node at (-1.995,6) {\small{\textcolor{cyan}{1}}};
\node at (-0.665,6) {\small{\textcolor{orange}{3}}};
\node at (0.665,6) {\small{\textcolor{violet}{3}}};
\node at (1.995,6) {\small{\textcolor{teal}{1}}};
\node at (-2.66,5.33) {\small{\textcolor{cyan}{1}}};
\node at (-1.33,5.33) {\small{\textcolor{orange}{4}}};
\node at (0,5.33) {\small{\textcolor{violet}{6}}};
\node at (1.33,5.33) {\small{\textcolor{teal}{4}}};
\node at (2.66,5.33) {\small{\textcolor{magenta}{1}}};
\node at (-3.325,4.66) {\small{\textcolor{cyan}{1}}};
\node at (-1.995,4.66) {\small{\textcolor{orange}{5}}};
\node at (-0.665,4.66) {\small{\textcolor{violet}{10}}};
\node at (0.665,4.66) {\small{\textcolor{teal}{10}}};
\node at (1.995,4.66) {\small{\textcolor{magenta}{5}}};
\node at (3.325,4.66) {\small{\textcolor{green}{1}}};
\node at (-4,4) {\small{\textcolor{cyan}{1}}};
\node at (-2.66,4) {\small{\textcolor{orange}{6}}};
\node at (-1.33,4) {\small{\textcolor{violet}{15}}};
\node at (0,4) {\small{\textcolor{teal}{20}}};
\node at (1.33,4) {\small{\textcolor{magenta}{15}}};
\node at (2.66,4) {\small{\textcolor{green}{6}}};
\node at (4,4) {\small{\textcolor{blue}{1}}};
\node at (-4.655,3.33) {\small{\textcolor{cyan}{1}}};
\node at (-3.325,3.33) {\small{\textcolor{orange}{7}}};
\node at (-1.995,3.33) {\small{\textcolor{violet}{21}}};
\node at (-0.665,3.33) {\small{\textcolor{teal}{35}}};
\node at (0.665,3.33) {\small{\textcolor{magenta}{35}}};
\node at (1.995,3.33) {\small{\textcolor{green}{21}}};
\node at (3.325,3.33) {\small{\textcolor{blue}{7}}};
\node at (4.655,3.33) {\small 1};
\node at (-5.33,2.66) {\small{\textcolor{cyan}{1}}};
\node at (-4,2.66) {\small{\textcolor{orange}{8}}};
\node at (-2.66,2.66) {\small{\textcolor{violet}{28}}};
\node at (-1.33,2.66) {\small{\textcolor{teal}{56}}};
\node at (0,2.66) {\small{\textcolor{magenta}{70}}};
\node at (1.33,2.66) {\small{\textcolor{green}{56}}};
\node at (2.66,2.66) {\small{\textcolor{blue}{28}}};
\node at (4,2.66) {\small 8};
\node at (5.33,2.66) {\small 1};
\node at (-5.985,2) {\small{\textcolor{cyan}{1}}};
\node at (-4.655,2) {\small{\textcolor{orange}{9}}};
\node at (-3.325,2) {\small{\textcolor{violet}{36}}};
\node at (-1.995,2) {\small{\textcolor{teal}{84}}};
\node at (-0.665,2) {\small{\textcolor{magenta}{126}}};
\node at (0.665,2) {\small{\textcolor{green}{126}}};
\node at (1.995,2) {\small{\textcolor{blue}{84}}};
\node at (3.325,2) {\small 36};
\node at (4.655,2) {\small 9};
\node at (5.985,2) {\small 1};
\node at (-6.66,1.33) {\small{\textcolor{cyan}{1}}};
\node at (-5.33,1.33) {\small{\textcolor{orange}{10}}};
\node at (-4,1.33) {\small{\textcolor{violet}{45}}};
\node at (-2.66,1.33) {\small{\textcolor{teal}{120}}};
\node at (-1.33,1.33) {\small{\textcolor{magenta}{210}}};
\node at (0,1.33) {\small{\textcolor{green}{252}}};
\node at (1.33,1.33) {\small{\textcolor{blue}{210}}};
\node at (2.66,1.33) {\small 120};
\node at (4,1.33) {\small 45};
\node at (5.33,1.33) {\small 10};
\node at (6.66,1.33) {\small 1};
\node at (-7.315,0.66) {\small{\textcolor{cyan}{1}}};
\node at (-5.985,0.66) {\small{\textcolor{orange}{11}}};
\node at (-4.665,0.66) {\small{\textcolor{violet}{55}}};
\node at (-3.325,0.66) {\small{\textcolor{teal}{165}}};
\node at (-1.995,0.66) {\small{\textcolor{magenta}{330}}};
\node at (-0.665,0.66) {\small{\textcolor{green}{462}}};
\node at (0.665,0.66) {\small{\textcolor{blue}{462}}};
\node at (1.995,0.66) {\small 330};
\node at (3.325,0.66) {\small 165};
\node at (4.665,0.66) {\small 55};
\node at (5.985,0.66) {\small 11};
\node at (7.315,0.66) {\small 1};
\node at (-8,0) {\small{\textcolor{cyan}{1}}};
\node at (-6.66,0) {\small{\textcolor{orange}{12}}};
\node at (-5.33,0) {\small{\textcolor{violet}{66}}};
\node at (-4,0) {\small{\textcolor{teal}{220}}};
\node at (-2.66,0) {\small{\textcolor{magenta}{495}}};
\node at (-1.33,0) {\small{\textcolor{green}{792}}};
\node at (0,0) {\small{\textcolor{blue}{924}}};
\node at (1.33,0) {\small 792};
\node at (2.66,0) {\small 495};
\node at (4,0) {\small 220};
\node at (5.33,0) {\small 66};
\node at (6.66,0) {\small 12};
\node at (8,0) {\small 1};
\draw[dotted] (-8.1,-0.25) -- (4.45,3.95);
\draw[dotted] (-8.1,0.2) -- (4.1,4.28);
\draw[dotted] (-7.44,0.85) -- (3.8,4.63);
\draw[dotted] (-6.78,1.53) -- (3.47,4.93);
\draw[dotted] (-6.12,2.2) -- (3.1,5.25);
\draw[dotted] (-5.46,2.85) -- (2.75,5.6);
\draw[dotted] (-4.8,3.5) -- (2.4,5.93);
\draw[dotted] (-4.14,4.18) -- (2.1,6.25);
\draw[dotted] (-3.48,4.84) -- (1.72,6.6);
\draw[dotted] (-2.82,5.5) -- (1.45,6.93);
\draw[dotted] (-2.16,6.16) -- (1.05,7.25);
\draw[dotted] (-1.5,6.82) -- (0.75,7.6);
\draw[dotted] (-0.84,7.5) -- (0.28,7.9);
\node[gray, rotate=18] at (-8.9, -0.35) {\scriptsize $n=12$};
\node[gray, rotate=18] at (-8.35, 0.35) {\scriptsize $n=11$};
\node[gray, rotate=18] at (-7.7, 1) {\scriptsize $n=10$};
\node[gray, rotate=18] at (-7.05, 1.65) {\scriptsize $n=9$};
\node[gray, rotate=18] at (-6.4, 2.3) {\scriptsize $n=8$};
\node[gray, rotate=18] at (-5.75, 2.95) {\scriptsize $n=7$};
\node[gray, rotate=18] at (-5.1, 3.6) {\scriptsize $n=6$};
\node[gray, rotate=18] at (-4.4, 4.3) {\scriptsize $n=5$};
\node[gray, rotate=18] at (-3.75, 4.95) {\scriptsize $n=4$};
\node[gray, rotate=18] at (-3.1, 5.6) {\scriptsize $n=3$};
\node[gray, rotate=18] at (-2.45, 6.25) {\scriptsize $n=2$};
\node[gray, rotate=18] at (-1.75, 6.93) {\scriptsize $n=1$};
\end{tikzpicture}
\caption{The first twelve diagonals of Pascal's triangle}
\label{fig:pascalTri}
\end{figure}

%%%%%%%%%%%%%%%%%%%%%%%%%%%%%%%%%%%%%%%%%
%%%%%%%%%%%%%%%%%%%%%%%%%%%%%%%%%%%%%%%%%

\subsection{A closed form \texorpdfstring{$f_n(\lambda)$}{characteristic polynomial} for the recurrence relation}\label{subsec:closed_form_of_char_poly}

To arrive at the closed form for the solution to the recurrence relation given in Equation~\eqref{eqn:recurrence_relation}, we use our observation of the intimate connection with Pascal's triangle and the coefficients of the individual $f_n(\lambda)$ as $n$ varies. For instance, the $1^{\mathrm{st}}$, $2^{\mathrm{nd}}$, $3^{\mathrm{rd}}$, etc.~coefficients of each $f_n(\lambda)$ coincide directly with entries in the $1^{\mathrm{st}}$, $2^{\mathrm{nd}}$, $3^{\mathrm{rd}}$, etc.~diagonals, respectively, of Pascal's triangle in the particular manner depicted in Figure~\ref{fig:pascalTri}. We highlight this connection to make this relationship apparent.

By careful analysis of the connection between the $f_n(\lambda)$ polynomials and Pascal's triangle, we prove our main result that a closed form for the recurrence relation in Subsection~\ref{subsec:recurrence_relation} is given by
\begin{equation}\label{eqn:general_case}
f_m(\lambda) = \sum_{i=0}^{\left \lfloor{\frac{m}{2}}\right\rfloor } (-1)^{m+i} \binom{m-i}{i} \lambda^{m-2i}.
\end{equation}
In actual practice, it is more helpful to consider the equation above for $m$ even and $m$ odd. The closed form when the index $m$ is even is
\begin{equation}\label{eqn:even_case}
f_{2k}(\lambda) = \sum_{i=0}^k (-1)^i \binom{2k-i}{i} \lambda^{2k-2i},
\end{equation}
and the closed form when the index $m$ is odd is
\begin{equation}\label{eqn:odd_case}
f_{2k+1}(\lambda) = \sum_{i=0}^k (-1)^{i+1} \binom{(2k+1)-i}{i} \lambda^{(2k+1)-2i}.
\end{equation}

Before we prove that the recurrence relation $f_n(\lambda) = -\lambda f_{n-1}(\lambda) - f_{n-2}(\lambda)$ is satisfied by these closed forms above, we give a motivating example for when $n$ is even. The case for when $n$ is odd is similar.

\begin{example}(An even case example)
Set $n = 10$. According to the recurrence relation in Equation~\eqref{eqn:recurrence_relation}, we have
\[
f_{10}(\lambda) = -\lambda f_{9}(\lambda) - f_{8}(\lambda).\label{even_example_case}\tag{$\star$}
\]
By Equation~(\ref{eqn:even_case}), the left hand side of \eqref{even_example_case} is
{\normalsize $$f_{10}(\lambda) = \sum_{i=0}^{5} (-1)^i \binom{10-i}{i} \lambda^{10-2i} = \binom{10}{0}\lambda^{10} - \binom{9}{1}\lambda^{8} + \binom{8}{2}\lambda^{6} - \binom{7}{3}\lambda^{4} + \binom{6}{4}\lambda^{2} - \binom{5}{5}.$$}%
By Equations~(\ref{eqn:even_case}) and~(\ref{eqn:odd_case}), the right hand side of \eqref{even_example_case} is
{\scriptsize
\begin{align*}
\mathlarger{-\lambda f_{9}(\lambda) - f_{8}(\lambda)} &= \mathlarger{-\lambda \mathlarger{\sum}_{i=0}^4 (-1)^{i+1} \binom{9-i}{i} \lambda^{9-2i} - \mathlarger{\sum}_{i=0}^4 (-1)^i \binom{8-i}{i} \lambda^{8-2i}} \\
&= -\lambda\left[ -\binom{9}{0}\lambda^{9} + \binom{8}{1}\lambda^{7} - \binom{7}{2}\lambda^{5} + \binom{6}{3}\lambda^{3} - \binom{5}{4}\lambda \right] - \left[ \binom{8}{0}\lambda^{8} - \binom{7}{1}\lambda^{6} + \binom{6}{2}\lambda^{4} - \binom{5}{3}\lambda^{2} + \binom{4}{4} \right] \\
&= \binom{9}{0}\lambda^{10} - \binom{8}{1}\lambda^{8} + \binom{7}{2}\lambda^{6} - \binom{6}{3}\lambda^{4} + \binom{5}{4}\lambda^{2} - \binom{8}{0}\lambda^{8} + \binom{7}{1}\lambda^{6} - \binom{6}{2}\lambda^{4} + \binom{5}{3}\lambda^{2} - \binom{4}{4} \\
&= \binom{9}{0}\lambda^{10} - \left[ \binom{8}{1} + \binom{8}{0} \right]\lambda^{8} + \left[ \binom{7}{2} + \binom{7}{1} \right]\lambda^{6} - \left[ \binom{6}{3} + \binom{6}{2} \right]\lambda^{4} + \left[ \binom{5}{4} + \binom{5}{3} \right]\lambda^{2} - \binom{4}{4} \\
&= \binom{10}{0}\lambda^{10} - \binom{9}{1}\lambda^{8} + \binom{8}{2}\lambda^{6} - \binom{7}{3}\lambda^{4} + \binom{6}{4}\lambda^{2} - \binom{5}{5}\\
&= \mathlarger{f_{10}(\lambda)}.
\end{align*}}%
The second to last equality above utilizes the fact that $\binom{9}{0} = \binom{10}{0}$ and $\binom{4}{4} = \binom{5}{5}$ trivially, as well as the fundamental combinatorial identity known as Pascal's rule:
\begin{equation}\label{eqn:pascal_prop}
\binom{n}{r} + \binom{n}{r+1} = \binom{n+1}{r+1}.
\end{equation}
\end{example}

We now state and prove the main result of this section.

\begin{theorem}\label{thm:closed_form_satifies_the_recurrence_relation}
Consider the recurrence relation $f_n(\lambda) = -\lambda f_{n-1}(\lambda) - f_{n-2}(\lambda)$ with initial conditions $f_1(\lambda) = -\lambda$ and $f_2(\lambda) = \lambda^2 - 1$. A closed form for this recurrence relation is given by
$$f_m(\lambda) = \sum_{i=0}^{\left \lfloor{\frac{m}{2}}\right\rfloor } (-1)^{m+i} \binom{m-i}{i} \lambda^{m-2i}.$$
\end{theorem}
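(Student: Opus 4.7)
The plan is to proceed by strong induction on $m$. First I would verify the two base cases: plugging $m=1$ into the closed form yields $(-1)^{1+0}\binom{1}{0}\lambda = -\lambda = f_1(\lambda)$, and plugging $m=2$ yields $(-1)^{2+0}\binom{2}{0}\lambda^2 + (-1)^{2+1}\binom{1}{1} = \lambda^2 - 1 = f_2(\lambda)$. With the base cases settled, I would assume the closed form is valid for all indices less than $n$ and show it holds for $n$ by verifying the recurrence $f_n(\lambda) = -\lambda f_{n-1}(\lambda) - f_{n-2}(\lambda)$.

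For the inductive step, I would split into two cases according to the parity of $n$, mirroring the motivating even example in the excerpt. Consider first $n = 2k$. By the inductive hypothesis,
\[
-\lambda f_{2k-1}(\lambda) = \sum_{i=0}^{k-1} (-1)^{i} \binom{2k-1-i}{i}\lambda^{2k-2i}, \qquad -f_{2k-2}(\lambda) = \sum_{i=0}^{k-1} (-1)^{i+1}\binom{2k-2-i}{i}\lambda^{2k-2-2i}.
\]
I would then re-index the second sum by setting $j = i+1$, so that both sums are expressed in terms of $\lambda^{2k-2j}$ and can be added term by term. The extreme contributions $j=0$ (from the first sum) and $j=k$ (from the shifted second sum) give $\binom{2k-1}{0}\lambda^{2k} = \lambda^{2k}$ and $(-1)^{k}\binom{k-1}{k-1} = (-1)^{k}$, which agree with the corresponding boundary terms $\binom{2k}{0}\lambda^{2k}$ and $(-1)^{k}\binom{k}{k}$ of $f_{2k}(\lambda)$. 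For each intermediate index $1 \le j \le k-1$, the combined coefficient is
\[
(-1)^{j}\left[\binom{2k-1-j}{j} + \binom{2k-1-j}{j-1}\right] = (-1)^{j}\binom{2k-j}{j}
\]
by Pascal's rule, which is exactly the coefficient of $\lambda^{2k-2j}$ in the closed form of $f_{2k}(\lambda)$. The odd case $n = 2k+1$ proceeds by the same recipe using the odd closed form in Equation~\eqref{eqn:odd_case}: re-index, separate boundary terms using $\binom{2k}{0} = \binom{2k+1}{0}$ and $\binom{k}{k} = \binom{k+1}{k+1}$ suitably, and collapse interior coefficients by Pascal's rule.

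The only real obstacle is bookkeeping: the floor function in $\lfloor m/2 \rfloor$ causes the upper summation limits for $f_{n-1}$ and $f_{n-2}$ to differ depending on the parity of $n$, so one must be careful about where the sums truncate when $n$ is odd versus even (in particular, $\lfloor (2k+1)/2 \rfloor = k$ while $\lfloor (2k-1)/2 \rfloor = k-1$). Once the parity split is made and the indices of the two sums are aligned by a single reindexing, the argument reduces to three routine checks — the top boundary coefficient, the bottom boundary coefficient, and Pascal's rule on the interior — all of which mirror the worked example for $n=10$ given immediately above the theorem.
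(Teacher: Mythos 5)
Your proposal is correct and follows essentially the same route as the paper's proof: verify the closed form at $m=1,2$, split the inductive step by the parity of $n$, re-index one sum to align powers of $\lambda$, peel off the two boundary terms, and collapse the interior coefficients with Pascal's rule. The only cosmetic difference is that you phrase the verification as strong induction while the paper presents it as checking that the closed form satisfies the recurrence and initial conditions, which is logically the same argument; your computations in the even case match the paper's line by line.
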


\begin{proof}
Consider the recurrence relation defined by
$$f_n(\lambda) = -\lambda f_{n-1}(\lambda) - f_{n-2}(\lambda).$$
It is readily verified that Equation~(\ref{eqn:general_case}) yields the initial conditions when $n=1,2$, so it suffices to show that Equation~(\ref{eqn:general_case}) satisfies the recurrence relation. We divide this into two separate cases dependent on the parity of the integer $n \geq 3$.

\noindent \textbf{CASE 1:} ($n$ is even)

\noindent Since $n$ is even with $n \geq 3$, then $n = 2k$ for some $k \geq 2$. Hence we want to show that the following recurrence relation is satisfied by the appropriate choice of closed forms, Equations~(\ref{eqn:even_case}) or~(\ref{eqn:odd_case}), as needed dependent on the parity of each of the three indices below:
\[
f_{2k}(\lambda) = -\lambda f_{2k-1}(\lambda) - f_{2k-2}(\lambda).\label{eqn:proof_even_rr}\tag{$\dagger$}
\]
We start with the right hand side of \eqref{eqn:proof_even_rr}. For the ease of the reader, the expressions in blue indicate the change from one line to the next.
\begingroup %to contain allowdisplaybreaks
\allowdisplaybreaks %allows page breaks between lines
{\small
\begin{align}
&\mathlarger{-\lambda f_{2k-1}(\lambda) - f_{2k-2}(\lambda)} \nonumber\\
& \hspace{.5in} = -\lambda \mathlarger{\sum}_{i=0}^{k-1} (-1)^{i+1} \binom{(2k-1)-i}{i} \lambda^{(2k-1)-2i} - \mathlarger{\sum}_{i=0}^{k-1} (-1)^i \binom{(2k-2)-i}{i} \lambda^{(2k-2)-2i} \label{equality_1}\\
& \hspace{.5in} = \mathlarger{\sum}_{i=0}^{k-1} (-1)^{\textcolor{blue}{i}} \binom{(2k-1)-i}{i} \lambda^{\textcolor{blue}{2k-2i}} \textcolor{blue}{\, +} \mathlarger{\sum}_{i=0}^{k-1} (-1)^{\textcolor{blue}{i+1}} \binom{(2k-2)-i}{i} \lambda^{(2k-2)-2i} \nonumber\\
\begin{split}
& \hspace{.5in} = \textcolor{blue}{\binom{2k-1}{0} \lambda^{2k}} + \mathlarger{\sum}_{\textcolor{blue}{i=1}}^{k-1} (-1)^{i} \binom{(2k-1)-i}{i} \lambda^{2k-2i}\\ 
&\hspace{1.8in} + \mathlarger{\sum}_{i=0}^{\textcolor{blue}{k-2}} (-1)^{i+1} \binom{(2k-2)-i}{i} \lambda^{(2k-2)-2i} \textcolor{blue}{+ \, (-1)^k \binom{k-1}{k-1} \lambda^0} \nonumber\\
\end{split}\\
\begin{split}
& \hspace{.5in} = \binom{2k-1}{0} \lambda^{2k} + \mathlarger{\sum}_{\textcolor{blue}{j=0}}^{\textcolor{blue}{k-2}} (-1)^{\textcolor{blue}{j+1}} \binom{(2k-1)-\textcolor{blue}{(j+1)}}{\textcolor{blue}{j+1}} \lambda^{2k-2\textcolor{blue}{(j+1)}}\\ 
&\hspace{1.8in} + \mathlarger{\sum}_{\textcolor{blue}{j=0}}^{k-2} (-1)^{\textcolor{blue}{j}+1} \binom{(2k-2)-\textcolor{blue}{j}}{\textcolor{blue}{j}} \lambda^{(2k-2)-2\textcolor{blue}{j}} + (-1)^k \binom{k-1}{k-1} \lambda^0 \\
\end{split}\label{equality_4}\\
\begin{split}
& \hspace{.5in} = \binom{2k-1}{0} \lambda^{2k} + \textcolor{blue}{\mathlarger{\sum}_{j=0}^{k-2} (-1)^{j+1} \left[ \binom{(2k-1)-(j+1)}{j+1} + \binom{(2k-2)-j}{j} \right] \lambda^{2k-2-2j}}\\
& \hspace{1.8in} + (-1)^k \binom{k-1}{k-1} \lambda^0 \nonumber\\
\end{split}\\
& \hspace{.5in} = \binom{2k-1}{0} \lambda^{2k} + \mathlarger{\sum}_{j=0}^{k-2} (-1)^{j+1} \textcolor{blue}{\binom{(2k-2-j)+1}{j+1}} \lambda^{2k-2-2j} + (-1)^k \binom{k-1}{k-1} \lambda^0 \label{equality_6}\\
& \hspace{.5in} = \binom{\textcolor{blue}{2k}}{0} \lambda^{2k} + \mathlarger{\sum}_{\textcolor{blue}{i=1}}^{\textcolor{blue}{k-1}} (-1)^{\textcolor{blue}{i}} \binom{\textcolor{blue}{2k-i}}{\textcolor{blue}{i}} \lambda^{\textcolor{blue}{2k-2i}} + (-1)^k \binom{\textcolor{blue}{2k-k}}{\textcolor{blue}{k}} \lambda^{\textcolor{blue}{2k-2k}} \label{equality_7}\\
& \hspace{.5in} = \mathlarger{\sum}_{\textcolor{blue}{i=0}}^{\textcolor{blue}{k}} (-1)^{i} \binom{2k-i}{i} \lambda^{2k-2i} \nonumber\\
& \hspace{.5in} = \mathlarger{f_{2k}(\lambda)} \label{equality_9}
\end{align}}%
\endgroup

Line~\eqref{equality_1} holds by the closed forms given by Equations~(\ref{eqn:even_case}) and~(\ref{eqn:odd_case}).
Line~\eqref{equality_4} holds by taking $i=j+1$ for the first summation and $i=j$ for the second summation.
Line~\eqref{equality_6} holds by Equation~(\ref{eqn:pascal_prop}).
Line~\eqref{equality_7} holds by taking $i=j+1$ and realizing that $\binom{2k-1}{0} = \binom{2k}{0}$ and $\binom{k-1}{k-1} = \binom{k}{k}$.
Line~\eqref{equality_9} holds by the closed form given by Equation~(\ref{eqn:even_case}). Therefore the closed forms satisfy the recurrence relation when $n$ is even.
\vspace{.25in}

\noindent \textbf{CASE 2:} ($n$ is odd)

\noindent Since $n$ is odd with $n \geq 3$, then $n = 2k + 1$ for some $k \geq 1$. Hence we want to show that the following recurrence relation is satisfied by the appropriate choice of closed forms, Equations~(\ref{eqn:even_case}) or~(\ref{eqn:odd_case}), as needed dependent on the parity of each of the three indices below:
\[
f_{2k+1}(\lambda) = -\lambda f_{2k}(\lambda) - f_{2k-1}(\lambda).\label{eqn:proof_odd_rr}\tag{$\dagger\dagger$}
\]
We start with the right hand side of \eqref{eqn:proof_odd_rr}:
\begingroup
\allowdisplaybreaks
{\small
\begin{align}
&\mathlarger{-\lambda f_{2k}(\lambda) - f_{2k-1}(\lambda)} \nonumber\\
& \hspace{.5in} = -\lambda \mathlarger{\sum}_{i=0}^{k} (-1)^{i} \binom{2k-i}{i} \lambda^{2k-2i} - \mathlarger{\sum}_{i=0}^{k-1} (-1)^{i+1} \binom{(2k-1)-i}{i} \lambda^{(2k-1)-2i} \label{equality_10}\\
& \hspace{.5in} = \mathlarger{\sum}_{i=0}^{k} (-1)^{\textcolor{blue}{i+1}} \binom{2k-i}{i} \lambda^{\textcolor{blue}{2k-2i+1}} \textcolor{blue}{+} \mathlarger{\sum}_{i=0}^{k-1} (-1)^{\textcolor{blue}{i}} \binom{(2k-1)-i}{i} \lambda^{(2k-1)-2i} \nonumber\\
\begin{split}
& \hspace{.5in} = \textcolor{blue}{-\binom{2k}{0} \lambda^{2k+1}} + \mathlarger{\sum}_{\textcolor{blue}{i=1}}^{k} (-1)^{i+1} \binom{2k-i}{i} \lambda^{2k-2i+1}\\ 
&\hspace{1.8in} + \mathlarger{\sum}_{\textcolor{blue}{j=1}}^{\textcolor{blue}{k}} (-1)^{\textcolor{blue}{j-1}} \binom{(2k-1)-\textcolor{blue}{(j-1)}}{\textcolor{blue}{j-1}} \lambda^{(2k-1)-2\textcolor{blue}{(j-1)}} \\
\end{split} \label{equality_12}\\
& \hspace{.5in} = -\binom{2k}{0} \lambda^{2k+1} + \mathlarger{\sum}_{\textcolor{blue}{j=1}}^{k} (-1)^{\textcolor{blue}{j}+1} \binom{2k-\textcolor{blue}{j}}{\textcolor{blue}{j}} \lambda^{2k-2\textcolor{blue}{j}+1} + \mathlarger{\sum}_{j=1}^{k} (-1)^{\textcolor{blue}{j+1}} \binom{\textcolor{blue}{2k-j}}{j-1} \lambda^{\textcolor{blue}{2k-2j+1}} \label{equality_13}\\
& \hspace{.5in} = -\binom{2k}{0} \lambda^{2k+1} + \textcolor{blue}{\mathlarger{\sum}_{j=1}^{k} (-1)^{j+1} \left[ \binom{2k-j}{j} + \binom{2k-j}{j-1} \right] \lambda^{2k-2j+1}} \nonumber\\
& \hspace{.5in} = -\binom{\textcolor{blue}{2k+1}}{0} \lambda^{2k+1} + \mathlarger{\sum}_{j=1}^{k} (-1)^{j+1} \textcolor{blue}{\binom{(2k+1)-j}{j}} \lambda^{\textcolor{blue}{(2k+1)-2j}} \label{equality_15}\\
& \hspace{.5in} = \mathlarger{\sum}_{\textcolor{blue}{j=0}}^{k} (-1)^{j+1} \binom{(2k+1)-j}{j} \lambda^{(2k+1)-2j} \nonumber\\
& \hspace{.5in} = \mathlarger{f_{2k+1}(\lambda)} \label{equality_17}
\end{align}}%
\endgroup

Line~\eqref{equality_10} holds by the closed forms given by Equations~(\ref{eqn:even_case}) and~(\ref{eqn:odd_case}).
Line~\eqref{equality_12} holds by taking $i=j-1$ for the second summation.
Line~\eqref{equality_13} holds by taking $i=j$ for the first summation.
Line~\eqref{equality_15} holds by Equation~(\ref{eqn:pascal_prop}) and realizing that $\binom{2k}{0} = \binom{2k+1}{0}$.
Line~\eqref{equality_17} holds by the closed form given by Equation~(\ref{eqn:odd_case}). Therefore the closed forms satisfy the recurrence relation when $n$ is odd. Hence Equation~(\ref{eqn:general_case}) satisfies the recurrence relation, and the result follows.
\end{proof}

%%%%%%%%%%%%%%%%%%%%%%%%%%%%%%%%%%%%%%%%%
%%%%%%%%%%%%%%%%%%%%%%%%%%%%%%%%%%%%%%%%%

\subsection{Parity properties of the \texorpdfstring{$f_n(\lambda)$}{the characteristic polynomial}}\label{subsec:some_properties}

Observing the graphs of various characteristic functions $f_n(\lambda)$ in Figure~\ref{fig:plots_of_various_equations}, it appears that the functions of even index are symmetric about the $y$-axis (the hallmark trait of an \textit{even function}), while the functions of odd index are symmetric about the origin---that is, the graph remains unchanged after 180 degree rotation about the origin (the hallmark trait of an \textit{odd function}). This observation prompts the following theorem.
\begin{theorem}\label{thm:parity_of_char_polys}
The characteristic polynomial $f_n(\lambda)$ is an even (respectively, odd) function when $n$ is even (respectively, odd).
\end{theorem}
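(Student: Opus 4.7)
The plan is to deduce the parity behavior directly from the closed form established in Theorem~\ref{thm:closed_form_satifies_the_recurrence_relation}. Recall that
\[
f_m(\lambda) = \sum_{i=0}^{\lfloor m/2 \rfloor} (-1)^{m+i} \binom{m-i}{i} \lambda^{m-2i}.
\]
The key observation is that every exponent appearing in this sum is of the form $m-2i$, and hence has the same parity as $m$ regardless of the index $i$. Therefore, when $m$ is even, $f_m(\lambda)$ is a polynomial containing only even powers of $\lambda$, which immediately implies $f_m(-\lambda) = f_m(\lambda)$. When $m$ is odd, $f_m(\lambda)$ contains only odd powers of $\lambda$, giving $f_m(-\lambda) = -f_m(\lambda)$. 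These are exactly the defining conditions of even and odd functions, respectively.

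As an alternative route (useful in case the reader prefers to avoid invoking the closed form), I would use strong induction on $n$ together with the recurrence $f_n(\lambda) = -\lambda f_{n-1}(\lambda) - f_{n-2}(\lambda)$. The base cases $f_1(\lambda) = -\lambda$ (odd) and $f_2(\lambda) = \lambda^2 - 1$ (even) are immediate. For the inductive step, note that multiplication by $\lambda$ toggles parity: an even function times $\lambda$ is odd, and an odd function times $\lambda$ is even. Since $n-1$ and $n-2$ differ in parity from $n$ and from each other (respectively), the inductive hypothesis guarantees that $-\lambda f_{n-1}(\lambda)$ and $-f_{n-2}(\lambda)$ are both functions of parity matching $n$. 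Their sum $f_n(\lambda)$ therefore has parity matching $n$ as well, because the set of even (respectively odd) functions is closed under addition.

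There is essentially no main obstacle here; both approaches are routine. The only minor care needed in the inductive argument is to track the parity shifts correctly when multiplying by $\lambda$ and when combining the terms with the correct signs, but this is bookkeeping rather than a genuine difficulty. I would present the closed-form argument as the primary proof for its brevity, and perhaps remark that induction via the recurrence gives an independent verification.
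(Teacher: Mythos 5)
Your primary argument is correct and is essentially the paper's own proof: both deduce the parity of $f_n(\lambda)$ from the closed form by observing that every exponent $m-2i$ shares the parity of $m$, so only even (respectively odd) powers of $\lambda$ appear. The inductive alternative you sketch via the recurrence is also sound, but it is an extra route the paper does not take and is not needed.
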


\begin{proof}
Clearly $f_1(-\lambda) = -f_1(\lambda)$ and $f_2(-\lambda) = f_2(\lambda)$, so it suffices to justify the following:
\begin{itemize}
\item Claim 1: $f_{2k}(-\lambda) = f_{2k}(\lambda)$ for all $k \geq 2$, and
\item Claim 2: $f_{2k+1}(-\lambda) = -f_{2k+1}(\lambda)$ for all $k \geq 1$.
\end{itemize}
By Equation~\eqref{eqn:even_case}, we have $f_{2k}(-\lambda) = f_{2k}(\lambda)$ if and only if $(-\lambda)^{2k-2i} = \lambda^{2k-2i}$. But since $2k - 2i$ is even, it is clear that $(-\lambda)^{2k-2i} = \lambda^{2k-2i}$ holds, so Claim 1 follows. On the other hand, by Equation~\eqref{eqn:odd_case}, we have $f_{2k+1}(-\lambda) = -f_{2k+1}(\lambda)$ if and only if $(-\lambda)^{(2k+1)-2i} = -\lambda^{(2k+1)-2i}$. But since $(2k+1)-2i$ is odd, it is clear that  $(-\lambda)^{(2k+1)-2i} = -\lambda^{(2k+1)-2i}$ holds, so Claim 2 follows.
\end{proof}

%%%%%%%%%%%%%%%%%%%%%%%%%%%%%%%%%%%%%%%%%
%%%%%%%%%%%%%%%%%%%%%%%%%%%%%%%%%%%%%%%%%

\subsection{A connection to Chebyshev polynomials of the second kind}\label{subsec:Chebyshev}

In personal communication with Paul Terwilliger, it was brought to our attention that our characteristic polynomials $f_n(\lambda)$ correspond to a certain normalization of Chebyshev polynomials of the second kind (up to a change of variable for $-\lambda$ in place of $\lambda$). We first recall the definition of these well-known polynomials, and then we give the normalization.

\begin{definition}
The sequence $(U_n(x))_{n \geq 0}$ of \textit{Chebyshev polynomials of the second kind} are given by the recurrence relation $U_n(x) = 2x \cdot U_{n-1}(x) - U_{n-2}(x)$
for $n \geq 2$ with initial conditions $U_0(x) = 1$ and $U_1(x) = 2x$.
\end{definition}

The following normalization of the Chebyshev polynomials of the second kind is well known  and appeared as early as 1646 in Vi\`ete's \textit{Opera Mathematica} (Chapter IX, Theorem VII), but we take the following definition and notation from the National Bureau of Standards in 1952~\cite{NBS1952}.

\begin{definition}
The sequence $(S_n(x))_{n \geq 0}$ of \textit{normalized Chebyshev polynomials of the second kind} are given by the recurrence relation $S_n(x) = x \cdot S_{n-1}(x) - S_{n-2}(x)$ for all $n \geq 2$ with initial conditions $S_0(x) = 1$ and $S_1(x) = x$.
\end{definition}

In Table~\ref{table:Chebshev_polys}, we present the Chebyshev polynomials of the second kind $U_n(x)$, normalized Chebyshev polynomials $S_n(x)$, and our characteristic polynomials $f_n(x)$ for the first seven $n$-index values.
\begin{table}[h!]
\[
\arraycolsep=5pt\def\arraystretch{1.2}
\begin{array}{|c||c|c|c|}
\hline
 n & U_n(x) & S_n(x) & f_n(x)\\
\hline 
\hline
0 & 1 & 1 & 1\\ \hline
1 & 2x & x & -x\\ \hline
2 & 4x^2 - 1 & x^2 - 1 & x^2 - 1\\ \hline
3 & 8 x^3 - 4x & x^3 - 2x & -x^3 + 2x\\ \hline
4 & 16 x^4 - 12 x^2 + 1 & x^4  - 3 x^2 + 1 & x^4  - 3 x^2 + 1\\ \hline
5 & 32 x^5 - 32 x^3 + 6x &  x^5 - 4 x^3 + 3x & -x^5 + 4 x^3 - 3x\\ \hline
6 & 64 x^6 - 80 x^4 + 24 x^2 - 1 &  x^6 - 5 x^4 + 6 x^2 - 1 & x^6 - 5 x^4 + 6 x^2 - 1\\
\hline
\end{array}
\]
\caption{Comparison of the first seven polynomials $U_n(x)$, $S_n(x)$, and $f_n(x)$}
\label{table:Chebshev_polys}
\end{table}

\noindent It is clear that we have the following connection between our family of characteristic polynomials and the normalized Chebyshev and Chebyshev polynomials:
$$ f_n(\lambda) =  S_n(-\lambda) = U_n\left( -\frac{\lambda}{2} \right).$$

\begin{remark}
The polynomials $S_n(x)$ are also known as \textit{Vieta-Fibonacci polynomials} denoted by $V_n(x)$ and introduced as such by Horadam~\cite{Horadam2002}.
\end{remark}

%%%%%%%%%%%%%%%%%%%%%%%%%%%%%%%%%%%%%%%%%%%%%%%%%%%
%%%%%%%%%%%%%%%%%%%%%%%%%%%%%%%%%%%%%%%%%%%%%%%%%%%
%%%%%%%%            SECTION 4             %%%%%%%%%
%%%%%%%%%%%%%%%%%%%%%%%%%%%%%%%%%%%%%%%%%%%%%%%%%%%
%%%%%%%%%%%%%%%%%%%%%%%%%%%%%%%%%%%%%%%%%%%%%%%%%%%

\section{The spectrum of the matrix \texorpdfstring{$A_n$}{the matrices}}\label{sec:spectrum}

As mentioned in the introduction, the $A_n$ matrices are the adjacency matrices of a very important class of graphs, namely the path graphs. For example, consider the path graph $P_4$ and its corresponding adjacency matrix, which is the matrix $A_4$.

\begin{center}
\begin{tikzpicture}
	\fill (-3,0) circle (2pt);
	\node at (-3,0.3) {\footnotesize$v_1$};
	\fill (-1.5,0) circle (2pt);
	\node at (-1.5,0.3) {\footnotesize$v_2$};
	\fill (0,0) circle (2pt);
	\node at (0,0.3) {\footnotesize$v_3$};
	\fill (1.5,0) circle (2pt);
	\node at (1.5,0.3) {\footnotesize$v_4$};
	\draw[black,thick] (-3,0) to (-1.5,0);
	\draw[black,thick] (-1.5,0) to (0,0);
	\draw[black,thick] (0,0) to (1.5,0);
	\node at (2.5,-0.05) {$\longleftrightarrow$};
	\node at (5,0.25) {$\kbordermatrix{
    & v_1 & v_2 & v_3 & v_4 \\
    v_1 & 0 & 1 & 0 & 0 \\
    v_2 & 1 & 0 & 1 & 0 \\
    v_3 & 0 & 1 & 0 & 1 \\
    v_4 & 0 & 0 & 1 & 0  
  }$};
\end{tikzpicture}	
\end{center}
These matrices are fundamental objects in spectral graph theory. In this section, we examine the spectrum of these matrices. Here is a table of the exact roots of the first five characteristic equations, and below that is a table of these roots approximated up to 5 decimal places.
{\small
$$
\begin{array}{|c||l|}
\hline
 n\text{-value} & \text{exact roots of } f_n(\lambda) = 0\\
\hline \hline
 1 & \lambda_1 =0 \\
 2 & \lambda_1 =1\text{ or } \lambda_1 =-1 \\
 3 & \lambda_1 =0\text{ or } \lambda_2 =\sqrt{2}\text{ or } \lambda_3 =-\sqrt{2} \\
 4 & \lambda_1 =\frac{1}{2} \left(-\sqrt{5}-1\right)\text{ or } \lambda_2 =\frac{1}{2} \left(\sqrt{5}-1\right)\text{ or } \lambda_3 =\frac{1}{2} \left(1-\sqrt{5}\right)\text{ or } \lambda_4 =\frac{1}{2} \left(\sqrt{5}+1\right) \\
 5 & \lambda_1 =0\text{ or } \lambda_2 =\sqrt{3}\text{ or } \lambda_3 =-\sqrt{3}\text{ or } \lambda_4 =-1\text{ or } \lambda_5 =1 \\
\hline
\end{array}
$$
}

{\small
$$
\begin{array}{|c||l|}
\hline
 n\text{-value} & \text{roots (up to 5 decimal places) of } f_n(\lambda) = 0 \text{ in increasing order}\\
\hline \hline
 1 & \lambda_1 =0 \\
 2 & \lambda_1 =-1\text{ or } \lambda_2 =1 \\
 3 & \lambda_1 =-1.41421\text{ or } \lambda_2 =0\text{ or } \lambda_3 =1.41421 \\
 4 & \lambda_1 =-1.61803\text{ or } \lambda_2 =-0.618034\text{ or } \lambda_3 =0.618034\text{ or } \lambda_4 =1.61803 \\
 5 & \lambda_1 =-1.73205\text{ or } \lambda_2 =-1\text{ or } \lambda_3 =0\text{ or } \lambda_3 =1\text{ or } \lambda_5 =1.73205 \\
\hline
\end{array}
$$
}

In Figure~\ref{fig:plots_of_various_equations}, we plot the characteristic polynomials $f_n(\lambda)$ for $n = 2, 3, 4, 5$. Notice that since the odd-index equations have no constant term, the graphs of those equations necessarily pass through the origin.

\begin{figure}[H]
\begin{center}
\includegraphics[height=2.25in]{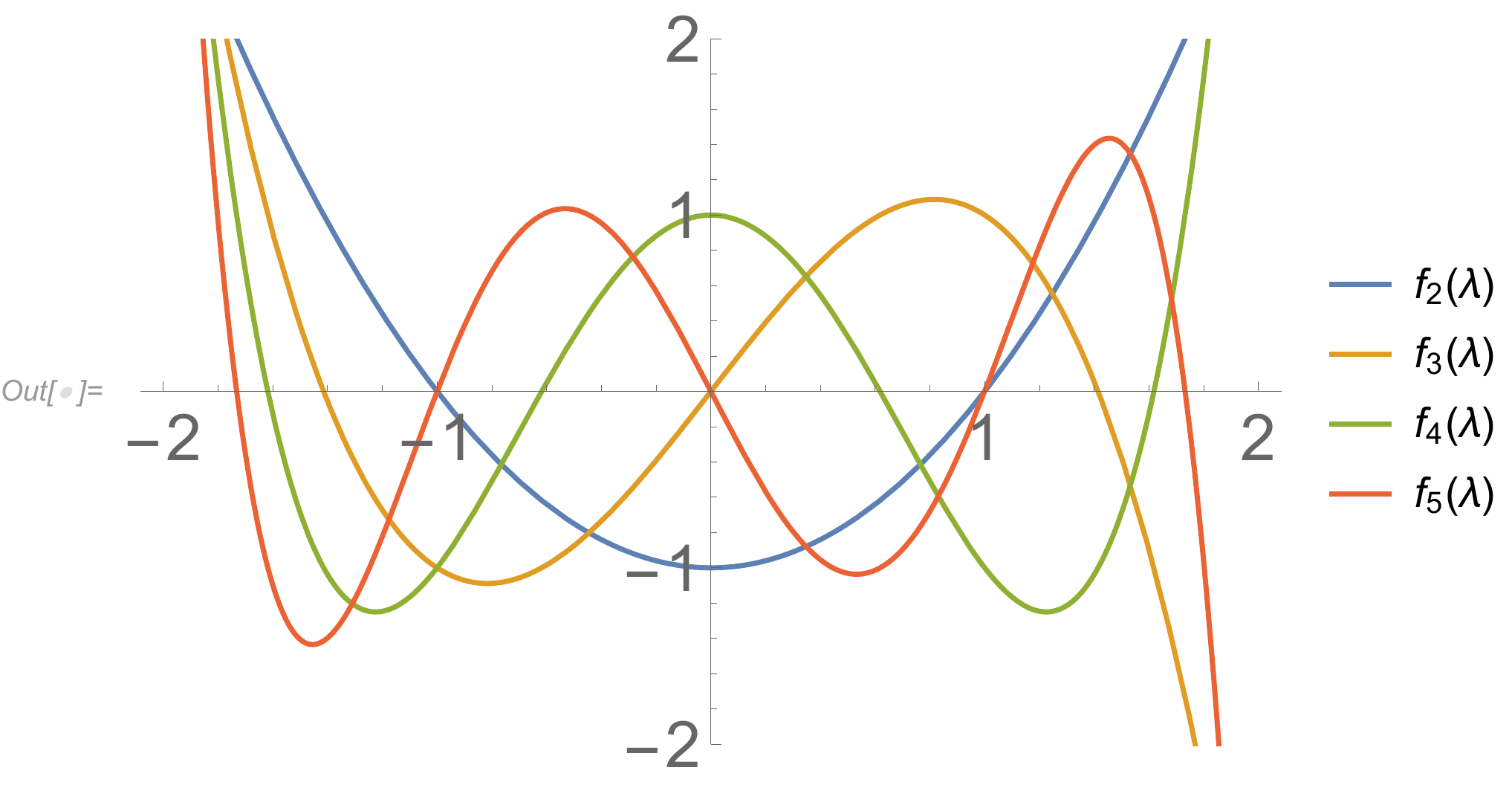}
\caption{Plots of $f_n(\lambda)$ for $n = 2, 3, 4, 5$}
\label{fig:plots_of_various_equations}
\end{center}
\end{figure}
\noindent Examining the roots of $f_n(\lambda)=0$ in just these few small cases, we can visually confirm the following:
\begin{itemize}
\item As Theorem~\ref{thm:parity_of_char_polys} guarantees, the polynomials $f_3$ and $f_5$ are odd functions while the polynomials $f_2$ and $f_4$ are even functions.
\item If $\lambda_i$ is a root of $f_n(\lambda) = 0$, then $-\lambda_i$ is also a root.
\item For $n$ even, all $n$ roots are nonzero and come in distinct pairs $\lambda_i$ and $-\lambda_i$ for $i = 1, 2, \ldots, \frac{n}{2}$.
\item For $n$ odd, exactly one root is zero and the other $n-1$ roots come in distinct pairs $\lambda_i$ and $-\lambda_i$ for $i = 1, 2, \ldots, \frac{n-1}{2}$.
\item As $n$ increases, the largest eigenvalue of the matrix $A_n$ also increases and appears to be bounded above by 2. More precisely, it seems that $\rho(A_n) < 2$ for all $n$.
\end{itemize}
In this section, we explore in generality all the observations above. Regarding the spectrum of the family $\{A_n\}_{n=1}^\infty$, we investigate the spectral radius, lower and upper bounds of $\spec(A_n)$, and a spectrum set-containment sufficiency condition for $\spec(A_m) \subset \spec(A_n)$ dependent on the values $m,n \in \mathbb{N}$. We prove many of our assertions using a trigonometric closed form for the roots of $f_n(\lambda)$.

%%%%%%%%%%%%%%%%%%%%%%%%%%%%%%%%%%%%%%%
%%%%%%%%%%%%%%%%%%%%%%%%%%%%%%%%%%%%%%%

\subsection{A trigonometric closed form for the roots of \texorpdfstring{$f_n(\lambda) = 0$}{the characteristic equations}}\label{subsec:closed_form_for_roots}
Though many sources going back as far as 1965 are credited in the literature as giving closed forms for the eigenvalues of tridiagonal symmetric matrices, few of these cited papers or books actually provide a proof~\cite{Abromovitz1965,Barnett1990,Kahan1966}. However in his 1978 book, Smith provides a proof for the closed forms for eigenvalues of tridiagonal (but not necessarily) symmetric matrices that have the values $a,b,c \in \mathbb{C}$ in the diagonal, superdiagonal, and subdiagonal, respectively~\cite{Smith1978}. For these matrix values, the $n$ eigenvalues are given by the formula
$$\lambda_s = a + 2\sqrt{bc} \cos\left(\frac{s \pi}{n+1}\right) \text{ for } s=1,\ldots,n.$$
Below we modify the proof to give a closed form expression for the eigenvalues of the $A_n$ matrices.

\begin{proposition}\label{prop:closed_form_for_eigenvalues}
The $n$ distinct eigenvalues of the matrix $A_n$ are
$$\lambda_s = 2\cos\left( \frac{s \pi}{n+1} \right) \text{ for } s=1,\ldots,n.$$
\end{proposition}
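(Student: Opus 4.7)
My plan is to prove this by directly exhibiting eigenvectors, rather than by working with the characteristic polynomial $f_n(\lambda)$. For each $s \in \{1, \ldots, n\}$, I would propose the candidate eigenvector $\vec{v}^{(s)} \in \mathbb{R}^n$ whose $j$-th entry is
$$v_j^{(s)} = \sin\left(\frac{j s \pi}{n+1}\right) \quad \text{for } j = 1, \ldots, n,$$
and verify that $A_n \vec{v}^{(s)} = \lambda_s \vec{v}^{(s)}$ where $\lambda_s = 2\cos\bigl(s\pi/(n+1)\bigr)$.

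The key computational ingredient is the sum-to-product identity
$$\sin((j-1)\theta) + \sin((j+1)\theta) = 2\sin(j\theta)\cos(\theta),$$
applied with $\theta = s\pi/(n+1)$. For any interior index $2 \leq j \leq n-1$, the $j$-th entry of $A_n \vec{v}^{(s)}$ equals $v_{j-1}^{(s)} + v_{j+1}^{(s)}$, and the identity immediately gives $2\cos(\theta) v_j^{(s)} = \lambda_s v_j^{(s)}$. The nice feature of this approach is that the two boundary rows also collapse into the same pattern once I formally extend the vector by setting $v_0^{(s)} = \sin(0) = 0$ and $v_{n+1}^{(s)} = \sin(s\pi) = 0$; both phantom entries vanish precisely because of the choice $\theta = s\pi/(n+1)$. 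Thus the boundary cases $j=1$ and $j=n$ reduce to the same trigonometric identity.

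Having established that each $\lambda_s$ is an eigenvalue, I would then argue distinctness: the map $s \mapsto s\pi/(n+1)$ sends $\{1,\ldots,n\}$ into the open interval $(0,\pi)$, on which cosine is strictly decreasing and hence injective, so the values $\lambda_1, \ldots, \lambda_n$ are pairwise distinct. I should also briefly note that each $\vec{v}^{(s)}$ is nonzero, since at least the first entry $\sin(s\pi/(n+1))$ is nonzero for $1 \le s \le n$. Because $A_n$ is an $n \times n$ matrix and has at most $n$ eigenvalues counted with multiplicity, the $n$ distinct values $\lambda_s$ account for the entire spectrum.

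The main obstacle I anticipate is not conceptual but bookkeeping: writing the boundary rows $j=1$ and $j=n$ uniformly with the interior rows requires the ``phantom entry'' trick above, and it is worth spelling out carefully why $v_{n+1}^{(s)} = 0$ is exactly the statement that $\theta$ must be an integer multiple of $\pi/(n+1)$. This is the step that selects the eigenvalues from the one-parameter family of solutions to the interior recurrence; without it, the trigonometric identity alone would not force quantization. A minor sanity check I would include is a direct verification that for $s \in \{1,\ldots,n\}$ the eigenvector $\vec{v}^{(s)}$ is not the zero vector, since if it were, the eigenvalue equation would be vacuous.
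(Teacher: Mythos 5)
Your proof is correct, and it takes a genuinely different route from the paper. The paper \emph{derives} the eigenvalues: it writes out the system $(A_n - \lambda I_n)\vec{v} = \vec{0}$ as the recurrence $v_{j-1} - \lambda v_j + v_{j+1} = 0$ with $v_0 = v_{n+1} = 0$, solves it with the ansatz $v_j = Bm_1^j + Cm_2^j$ where $m_{1,2}$ are the roots of $m^2 - \lambda m + 1 = 0$, and extracts $\lambda = m_1 + m_2 = 2\cos\bigl(s\pi/(n+1)\bigr)$ from the boundary conditions. You instead \emph{verify} the guessed eigenvectors $v_j^{(s)} = \sin\bigl(js\pi/(n+1)\bigr)$ directly via the sum-to-product identity, then count: $n$ distinct eigenvalues of an $n\times n$ matrix exhaust the spectrum. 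Your verification is in fact logically tighter than the paper's derivation, which glosses over a few points --- it asserts the roots $m_1, m_2$ are real (they are actually complex conjugates on the unit circle whenever $|\lambda| < 2$), it implicitly assumes $m_1 \neq m_2$ when writing the general solution, and the step from $m_1^2 = e^{2\pi i s/(n+1)}$ to $m_1 = e^{\pi i s/(n+1)}$ silently discards a sign. None of these gaps affect the truth of the result, but your approach avoids them entirely. What the paper's route buys is the discovery narrative (how one would find the formula without knowing it in advance); what yours buys is a shorter, airtight argument, at the cost of having to produce the eigenvector out of thin air. Both are complete proofs; your closing remarks about the ``quantization'' role of $v_{n+1}^{(s)} = 0$ are really part of the derivation story and are not needed for the verification to stand.
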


\begin{proof}
Let $\lambda$ represent an eigenvalue of the matrix $A_n$ with corresponding eigenvector $\vec{v}$. Consider the equation $A_n\vec{v} = \lambda\vec{v}$. This can be rewritten as $\left( A_n - \lambda I_n \right) \vec{v} = \vec{0}$. Then $$\left( A_n - \lambda I_n \right) \vec{v} = 
\begin{bmatrix} 
-\lambda & 1 &  & \\
1 & \ddots & \ddots & \\
 & \ddots & \ddots & 1\\
 &  & 1 & -\lambda
\end{bmatrix}
\begin{bmatrix}
\vec{v}_1 \\
\vec{v}_2 \\
\vdots \\
\vec{v}_n
\end{bmatrix}
= \begin{bmatrix}
0 \\
0 \\
\vdots \\
0
\end{bmatrix}.
$$
The resulting system of equations is as follows:
\begin{align*}
-\lambda\vec{v}_1 + \vec{v}_2 &= 0\\
\vec{v}_1 - \lambda\vec{v}_2 + \vec{v}_3 &= 0\\
\vdots \quad \quad & \quad \, \, \vdots\\
\vec{v}_{j-1} - \lambda\vec{v}_{j} + \vec{v}_{j+1} &= 0\\
\vdots \quad \quad & \quad \, \, \vdots\\
\vec{v}_{n-2} - \lambda\vec{v}_{n-1} + \vec{v}_{n} &= 0\\
\vec{v}_{n-1} - \lambda\vec{v}_n &= 0
\end{align*}
In general, a single equation can be defined by 
\begin{equation}\label{eigenvector_eqn}
\vec{v}_{j-1} - \lambda\vec{v}_{j} + \vec{v}_{j+1} = 0 \quad \text{ for } j=1,\hdots,n
\end{equation}
 where $\vec{v}_0 = 0$ and $\vec{v}_{n+1} = 0$. Let a solution to this equation be $\vec{v}_j=Am^j$ for arbitrary nonzero constants $A$ and $m$. Substituting this solution into Equation~\eqref{eigenvector_eqn} shows that $m$ is a root of 
$$m^2 - \lambda m + 1 = 0.$$
Since $m^2 - \lambda m + 1 = 0$ is a quadratic equation, there are two, in our case real, roots of this quadratic. Let us denote these two roots by $m_1$ and $m_2$. Hence the general solution of Equation~\eqref{eigenvector_eqn} is 
\begin{equation}\label{general_solution}
\vec{v}_j = Bm_1^j + Cm_2^j
\end{equation}
where $B$ and $C$ are arbitrary nonzero constants. Since $\vec{v}_0 = 0$ and $\vec{v}_{n+1} = 0$, we get $B+C=0$ and $Bm_1^{n+1} + Cm_2^{n+1}=0$, respectively, by Equation~\eqref{general_solution}. Substituting the former equation into the latter results in
$$\left( \frac{m_1}{m_2} \right)^{n+1} = 1 = e^{2\pi i s} \quad \text{ for } s = 1,\hdots,n.$$ Hence it follows that
\begin{equation}\label{rootsFrac_eqn}
\frac{m_1}{m_2} = e^{2\pi i s/(n+1)}.
\end{equation}
Since $m^2 - \lambda m + 1 = 0$ is a quadratic equation, the product of the roots is given by $m_1m_2 = 1$. Using Equation~\eqref{rootsFrac_eqn}, we have the sequence of implications
\begin{align*}
\frac{m_1}{m_2} = e^{2\pi i s/(n+1)} &\Longrightarrow m_1 = m_2 \cdot e^{2\pi i s/(n+1)}\\
&\Longrightarrow m_1^2 = m_1m_2 \cdot e^{2\pi i s/(n+1)}\\
&\Longrightarrow m_1^2 = e^{2\pi i s/(n+1)}	&\mbox{since $m_1 m_2 = 1$}\\
&\Longrightarrow m_1 = e^{\pi i s/(n+1)}.
\end{align*}
Through a similar argument it can be found that $$m_2 = e^{-\pi i s/(n+1)}.$$
Again since $m^2 - \lambda m + 1 = 0$ is a quadratic equation, the sum of the roots is given by $m_1 + m_2 = \lambda$. It follows that
\begin{align*}
\lambda &= m_1 + m_2\\
&= e^{\pi i s/(n+1)} + e^{-\pi i s/(n+1)} \\
&= e^{i\left(\frac{\pi s}{n+1}\right)} + e^{i\left(\frac{-\pi s}{n+1}\right)} \\
&= \left( \cos \left(\frac{\pi s}{n+1}\right) + i \sin \left(\frac{\pi s}{n+1}\right)\right) + \left( \cos \left(\frac{-\pi s}{n+1}\right) + i \sin \left(\frac{-\pi s}{n+1}\right)\right) \\
&= 2\cos{\left( \frac{\pi s}{n+1} \right)},
\end{align*}
where the last equality holds since cosine is an even function and sine is an odd function. Thus we conclude that $\lambda_s = 2\cos{\left( \frac{\pi s}{n+1} \right)}$ for $s = 1,\hdots,n$.
\end{proof}

%%%%%%%%%%%%%%%%%%%%%%%%%%%%%%%%%%%%%%%%%%%%%%%%%%%
%%%%%%%%%%%%%%%%%%%%%%%%%%%%%%%%%%%%%%%%%%%%%%%%%%%

\subsection{Spectral radius of the matrix \texorpdfstring{$A_n$}{the matrices} and some boundary values}\label{subsec:spectral_radius}

Much work has been done to study the bounds of the eigenvalues of real symmetric matrices. Methods discovered in the mid-nineteenth century reduce the original matrix to a tridiagonal matrix whose eigenvalues are the same as those of the original matrix. Exploiting this idea, Golub in 1962 determined lower bounds on tridiagonal matrices of the form
$$
\begin{bmatrix} 
  a_0 & b_1 & 0 & \ldots & \ldots & 0 \\
  b_1 & a_2 & b_2 & \ddots & & \vdots \\
  0 & b_2 & \ddots & \ddots & \ddots& \vdots \\
  \vdots & \ddots & \ddots & \ddots & b_{n-2} & 0\\
  \vdots & & \ddots & b_{n-2} & a_{n-1} & b_{n-1} \\
  0 & \ldots & \ldots & 0 & b_{n-1} & a_n 
\end{bmatrix}.
$$

\begin{proposition}[Golub~{\cite[Corollary~1.1]{Golub1962}}]\label{prop:Golub_result}
Let $A$ be an $n \times n$ matrix with real entries $a_{ij} = a_i$ for $i=j$, $a_{ij} = b_m$ for $|i-j|=1$ where $m = \min(i,j)$, and $a_{ij} = 0$ otherwise. Then the interval $[a_k - \sigma_k, \; a_k + \sigma_k]$ where $\sigma_k^2 = b_k^2 + b_{k-1}^2$ contains at least one eigenvalue.
\end{proposition}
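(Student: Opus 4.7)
The plan is to derive the bound from a general residual-norm estimate for Hermitian matrices and then specialize to the $k$-th standard basis vector $e_k$. The auxiliary lemma I would prove first is: for any real symmetric matrix $A \in \mathbb{R}^{n \times n}$ and any unit vector $v \in \mathbb{R}^n$, there exists an eigenvalue $\lambda$ of $A$ satisfying
$$|\lambda - \langle Av, v\rangle| \;\le\; \|Av - \langle Av, v\rangle v\|.$$
This is an immediate consequence of the spectral theorem: expand $v = \sum_i c_i u_i$ in an orthonormal eigenbasis $\{u_i\}$ with corresponding eigenvalues $\{\lambda_i\}$, set $\mu = \langle Av, v\rangle = \sum_i \lambda_i c_i^2$, and note that
$$\|Av - \mu v\|^2 = \sum_i (\lambda_i - \mu)^2 c_i^2 \;\ge\; \Big(\min_i (\lambda_i - \mu)^2\Big) \sum_i c_i^2 = \min_i (\lambda_i - \mu)^2,$$
since $v$ is a unit vector. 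Hence some $\lambda_i$ lies within distance $\|Av - \mu v\|$ of $\mu$.

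With this lemma in hand, the proof of Proposition~\ref{prop:Golub_result} reduces to a single direct computation. I would take $v = e_k$, the $k$-th standard basis vector; then by the tridiagonal structure described in the statement,
$$A e_k = b_{k-1}\, e_{k-1} + a_k\, e_k + b_k\, e_{k+1},$$
where I adopt the natural convention $b_0 = b_n = 0$ to absorb the boundary cases $k=1$ and $k=n$ uniformly. Consequently $\langle A e_k, e_k\rangle = a_k$, and the residual
$$A e_k - a_k\, e_k = b_{k-1}\, e_{k-1} + b_k\, e_{k+1}$$
is an orthogonal combination of two basis vectors, so its Euclidean norm equals $\sqrt{b_{k-1}^2 + b_k^2} = \sigma_k$. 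The lemma then produces an eigenvalue $\lambda$ of $A$ with $|\lambda - a_k| \le \sigma_k$, which is exactly the assertion that $\lambda \in [a_k - \sigma_k,\, a_k + \sigma_k]$.

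I do not anticipate a serious obstacle here. The residual-norm lemma is completely classical, and the specialization is mechanical once one thinks to test the Rayleigh quotient at the standard basis vector $e_k$. The only piece of bookkeeping is the convention $b_0 = b_n = 0$ at the endpoints, which I would state once at the outset; after that, the computation of $A e_k - a_k e_k$ is uniform in $k$ and requires no casework. It is worth remarking that this bound is strictly sharper than the usual Gershgorin estimate $|\lambda - a_k| \le |b_{k-1}| + |b_k|$, since $\sqrt{b_{k-1}^2 + b_k^2} \le |b_{k-1}| + |b_k|$; the improvement is precisely the gain one gets from exploiting symmetry via the spectral theorem rather than from row-wise absolute-value bounds.
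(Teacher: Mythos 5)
Your proof is correct. The paper itself gives no proof of this proposition---it is quoted directly from Golub's 1962 paper---and your argument (the classical residual-norm bound $|\lambda - \langle Av,v\rangle| \le \|Av - \langle Av,v\rangle v\|$ for a unit vector $v$, specialized to $v = e_k$) is essentially Golub's own derivation of his Corollary~1.1 from his Theorem~1, so there is nothing substantive to compare beyond noting that your write-up is complete and the endpoint convention $b_0 = b_n = 0$ is handled properly.
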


Applying the latter proposition to the tridiagonal $A_n$ matrices, we easily yield an interval in which the lower bound (in absolute value) of the eigenvalues of $A_n$ is guaranteed to occur.

\begin{corollary}
For each matrix $A_n$, there exists an eigenvalue $\lambda$ such that $|\lambda| \leq 1$.
\end{corollary}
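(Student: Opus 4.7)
The plan is to apply Proposition~\ref{prop:Golub_result} directly to the tridiagonal matrices $A_n$, exploiting the boundary effect at the first (or last) row. In the notation of Golub's result, the matrix $A_n$ has $a_i = 0$ for every $i$, and $b_i = 1$ for $1 \le i \le n-1$. The conventional interpretation of Golub's formula at the edge of the matrix is $b_0 = 0$ (and likewise $b_n = 0$), since those entries do not exist.

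With this in hand I would take $k = 1$. Then
$$\sigma_1^2 = b_1^2 + b_0^2 = 1 + 0 = 1,$$
so $\sigma_1 = 1$, and Proposition~\ref{prop:Golub_result} guarantees that the interval
$$[a_1 - \sigma_1,\; a_1 + \sigma_1] = [-1,\;1]$$
contains at least one eigenvalue of $A_n$. Any eigenvalue $\lambda$ lying in this interval automatically satisfies $|\lambda| \le 1$, which is exactly the claim. Alternatively, one can take $k = n$ and obtain the same interval via $\sigma_n^2 = b_{n-1}^2 + b_n^2 = 1$.

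A brief remark on edge cases: for $n = 1$ the matrix $A_1 = [0]$ has sole eigenvalue $0$, so the statement is trivial (and is even recovered by Golub's formula with $b_0 = b_1 = 0$, yielding the degenerate interval $[0,0]$). For $n \ge 2$, the argument above is immediate.

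There is essentially no obstacle here; the only subtlety worth spelling out in the writeup is the convention $b_0 = 0$ (or $b_n = 0$) used to evaluate $\sigma_k$ at the boundary rows. I would briefly justify this convention and then present the one-line application of Golub's proposition.
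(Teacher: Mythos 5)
Your proposal is correct and matches the paper's own proof: both apply Golub's result at $k=1$ with $a_1=0$ and $\sigma_1=1$ to place an eigenvalue in $[-1,1]$. Your explicit remark on the boundary convention $b_0=0$ only makes the same argument slightly more careful.
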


\begin{proof}
Consider the matrix $A_n$. Then in the language of Proposition~\ref{prop:Golub_result}, we have $a_i = 0$ and $b_i = 1$ for each $i$. Thus the $\sigma_k$ are calculated as follows
$$
\sigma_k =
\begin{cases}
	1 & \mbox{if } k=1, \\ 
	\sqrt{2} & \mbox{if } 2 \leq k \leq n-1, \\
	1 & \mbox{if } k=n.
\end{cases}
$$
By Proposition~\ref{prop:Golub_result}, an eigenvalue is guaranteed to exist in the the interval $[a_k - \sigma_k, \; a_k + \sigma_k]$. In particular for $k=1$ the result follows.
\end{proof}

So we have a firm interval where the lower bound (in absolute value) will contain an eigenvalue of $A_n$. As for an upper bound, it is clear by Proposition~\ref{prop:closed_form_for_eigenvalues} that the following theorem holds.

\begin{theorem}
For each $n \in \mathbb{N}$, the spectral radius $\rho(A_n)$ of the matrix $A_n$ is bounded above by $2$.
\end{theorem}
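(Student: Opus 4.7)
The plan is to invoke the trigonometric closed form from Proposition~\ref{prop:closed_form_for_eigenvalues} directly. By that proposition, every eigenvalue of $A_n$ has the form $\lambda_s = 2\cos\!\left(\frac{s\pi}{n+1}\right)$ for some $s \in \{1,\ldots,n\}$, so the spectral radius is
\[
\rho(A_n) = \max_{1 \le s \le n} \left| 2\cos\!\left(\tfrac{s\pi}{n+1}\right) \right|.
\]
Thus the problem reduces to bounding $|\cos|$ on the relevant finite set of arguments.

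First I would observe that for any real number $x$ we have $|\cos x| \le 1$, so each $|\lambda_s| \le 2$ immediately, which yields the stated bound $\rho(A_n) \le 2$. Since the statement only asks for the weak inequality, this is sufficient. I would present the argument in one or two short lines: cite Proposition~\ref{prop:closed_form_for_eigenvalues}, apply $|\cos| \le 1$, take the maximum over $s$.

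There is essentially no obstacle; the proof is a one-step consequence of the previous subsection's main result. If one wanted to upgrade to the strict inequality $\rho(A_n) < 2$ (as suggested by the bulleted observation just above the theorem), the additional remark needed is that the argument $\frac{s\pi}{n+1}$ lies strictly between $0$ and $\pi$ for $s \in \{1,\ldots,n\}$, so $\cos\!\left(\frac{s\pi}{n+1}\right) \in (-1,1)$ and hence $|\lambda_s| < 2$. I would include this strict bound as a brief final remark for completeness, but the main statement itself requires nothing beyond the direct substitution into the closed form.
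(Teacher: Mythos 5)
Your proposal is correct and matches the paper's approach exactly: the paper also treats this theorem as an immediate consequence of Proposition~\ref{prop:closed_form_for_eigenvalues} together with $|\cos x|\le 1$, offering no further argument. Your additional remark that the strict bound $\rho(A_n)<2$ follows because $\frac{s\pi}{n+1}\in(0,\pi)$ is a correct and worthwhile refinement, but not something the paper itself spells out.
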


%%%%%%%%%%%%%%%%%%%%%%%%%%%%%%%%%%%
%%%%%%%%%%%%%%%%%%%%%%%%%%%%%%%%%%%

\subsection{Sufficient condition for \texorpdfstring{$\spec(A_m) \subset \spec(A_n)$}{spectrum containments}}\label{subsec:suff_cond}

Recall in Section~\ref{sec:char_polys}, we gave the characteristic polynomial $f_n(\lambda)$ of the matrix $A_n$ when $n=7$ as follows:
$$ f_7(\lambda) = -\lambda ^7+6 \lambda ^5-10 \lambda ^3+4 \lambda.$$
The characteristic equation $f_7(\lambda) = 0$ has the seven roots
$$
\begin{array}{l}
\lambda =0\\
\lambda =\pm\sqrt{2}\\
\lambda =\pm\sqrt{2-\sqrt{2}}\\
\lambda =\pm\sqrt{\sqrt{2}+2}.
\end{array}
$$
It is interesting to note that these distinct seven roots appear as eigenvalues in a higher-degree characteristic polynomial. For example, when $n=15$, the characteristic polynomial is
$$f_{15}(\lambda) = -\lambda ^{15}+14 \lambda ^{13}-78 \lambda ^{11}+220 \lambda ^9-330 \lambda ^7+252 \lambda ^5-84 \lambda ^3+8 \lambda.$$
The characteristic equation $f_{15}(\lambda) = 0$ has the 15 roots
$$
\begin{array}{ll}
\lambda =0 & \lambda =\pm\sqrt{2-\sqrt{2-\sqrt{2}}}\\
\lambda =\pm\sqrt{2} & \lambda =\pm\sqrt{\sqrt{2-\sqrt{2}}+2}\\
\lambda =\pm\sqrt{2-\sqrt{2}} & \lambda =\pm\sqrt{2-\sqrt{\sqrt{2}+2}}\\
\lambda =\pm\sqrt{\sqrt{2}+2} & \lambda =\pm\sqrt{\sqrt{\sqrt{2}+2}+2}.
\end{array}
$$

This phenomenon is no coincidence. In fact the following theorem provides a way to predict the values $m < n$ for which a complete set of roots of the equation $f_m(\lambda)=0$ will be contained in the set of roots of the equation $f_n(\lambda)=0$.

\begin{theorem}\label{thm:spec}
Fix $m \in \mathbb{N}$. For any $n \in \mathbb{N}$ such that $m<n$ and $n \equiv m \pmod{m+1}$, it follows that $\spec(A_m) \subset \spec(A_n)$.
\end{theorem}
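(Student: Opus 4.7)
The plan is to leverage the trigonometric closed form for eigenvalues established in Proposition~\ref{prop:closed_form_for_eigenvalues}, which gives
$$\spec(A_n) = \left\{ 2\cos\left(\tfrac{s\pi}{n+1}\right) : s = 1, \ldots, n \right\}.$$
The hypothesis $n \equiv m \pmod{m+1}$ with $m < n$ means $n = m + k(m+1)$ for some positive integer $k$, so that $n+1 = (k+1)(m+1)$. Thus $(n+1)/(m+1)$ is a positive integer; let $q = k+1 \geq 2$ denote this ratio. This divisibility of $n+1$ by $m+1$ is the key structural observation.

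The substantive step is then to exhibit, for each eigenvalue of $A_m$, an explicit eigenvalue of $A_n$ equal to it. Given $\lambda = 2\cos(s\pi/(m+1))$ with $s \in \{1,\ldots,m\}$, I would define $t := qs$. One computes
$$2\cos\left(\tfrac{t\pi}{n+1}\right) = 2\cos\left(\tfrac{qs\pi}{q(m+1)}\right) = 2\cos\left(\tfrac{s\pi}{m+1}\right) = \lambda,$$
so it suffices to confirm that $t \in \{1,\ldots,n\}$. This follows because $t = qs \geq q \geq 2 \geq 1$, and $t = qs \leq qm \leq qm + (q-1) = q(m+1) - 1 = n$. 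Hence every element of $\spec(A_m)$ appears in $\spec(A_n)$.

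The final point to address for $\spec(A_m) \subset \spec(A_n)$ in the strict set-theoretic sense (rather than just as multisets) is that the containment is indeed proper: since $m < n$, the matrix $A_n$ has strictly more eigenvalues, and by the Remark following the definition of $f_n(\lambda)$ all $n$ eigenvalues of $A_n$ are distinct, so at least one eigenvalue of $A_n$ lies outside $\spec(A_m)$. There is no real obstacle in this argument; the only subtle point worth highlighting is the need for the hypothesis $m < n$ together with $n \equiv m \pmod{m+1}$ to guarantee $q \geq 2$, which in turn ensures $t = qs$ lands in the admissible index range. The heavy lifting was done in Proposition~\ref{prop:closed_form_for_eigenvalues}; the sufficiency condition is then essentially a divisibility observation.
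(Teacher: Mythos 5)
Your proposal is correct and follows essentially the same route as the paper: both exploit the factorization $n+1=(k+1)(m+1)$ from Proposition~\ref{prop:closed_form_for_eigenvalues}'s closed form and match the eigenvalue indexed by $r$ in $\spec(A_m)$ with the one indexed by $(k+1)r$ in $\spec(A_n)$. Your explicit verification that $t=qs$ lies in $\{1,\ldots,n\}$ and your note on properness of the containment are minor polish on the paper's argument, not a different method.
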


\begin{proof}
Let $m,n \in \mathbb{N}$ such that $m<n$ and $n \equiv m \pmod {m+1}$. Then $n - m = (m+1)k$ for some $k \in \mathbb{N}$. This can be rewritten as 
\begin{equation}\label{eqn:thmspec_assumption}
n = m(k+1) + k.
\end{equation}
Now consider $\spec(A_m)$ and $\spec(A_n)$. By Proposition~\ref{prop:closed_form_for_eigenvalues}, these are defined as follows:
\begin{center}
$\spec(A_m) = \left\lbrace \lambda_r \mid \lambda_r = 2\cos\left( \frac{r \pi}{m+1} \right)\right\rbrace_{r=1}^m$\\
$\spec(A_n) = \left\lbrace \lambda_s \mid \lambda_s = 2\cos\left( \frac{s \pi }{n+1} \right)\right\rbrace_{s=1}^n.$
\end{center}
By the following sequence of equalities, we can rewrite $\lambda_s \in \spec(A_n)$ in terms of $m$ and $k$.
\begingroup
\allowdisplaybreaks
{
\begin{align*}
\lambda_s &= 2\cos\left( \frac{s \pi}{n+1} \right) \text{ for  } s = 1, 2, \ldots, n\\
&= 2\cos\left( \frac{s \pi}{m(k+1)+k+1} \right) \text{ for  } s = 1, 2, \ldots, m(k+1) + k		&\mbox{by Equation~\eqref{eqn:thmspec_assumption}}\\
&= 2\cos\left( \frac{s \pi}{mk+m+k+1} \right)\\
&= 2\cos\left( \frac{s \pi}{(m+1)(k+1)} \right)\\
&= 2\cos\left( \frac{\left( \frac{s}{k+1} \right) \pi}{m+1} \right).
\end{align*}
}
\endgroup
Recall $\lambda_r = 2\cos\left( \frac{r \pi}{m+1} \right)$. Hence $\lambda_r = \lambda_s$ when $r = \frac{s}{k+1}$. 

Since $s \in S = \{ 1, 2, \ldots, m(k+1) + k \}$, the set $T = \{ k+1, 2(k+1), \ldots, m(k+1)\}$ is a subset of $S$. Notice for each $s \in T$, there exists a unique $r \in \{ 1, 2, \ldots, m\}$ under the equality $r = \frac{s}{k+1}$. Thus for all $\lambda_r \in \spec(A_m)$, there exists a $\lambda_s \in \spec(A_n)$ such that $\lambda_r = \lambda_s$. Then $\lambda_r \in \spec(A_n)$ for all $r \in \{ 1, 2, \ldots, m\}$. Therefore $\spec(A_m) \subset \spec(A_n)$.
\end{proof}

\begin{remark}
In the introduction we noted our observation via \texttt{Mathematica} that the golden ratio, its reciprocal, and their additive inverses arise as eigenvalues of $A_n$ for the $n$ values 4, 9, 14, 19, 24, 29, 34, 39, and 44. In Example~\ref{exam:golden_ratio}, we computed the four eigenvalues of $A_4$. As an application of Theorem~\ref{thm:spec}, if we let $m=4$, then it is evident that the $\spec(A_4) \subset \spec(A_{4+5k})$ for all $k \in \mathbb{N}$, thus confirming our observation.
\end{remark}
%%%%%%%%%%%%%%%%%%%%%%%%%%%%%%%%%%%%%%%%%%%%%%%%%%%
%%%%%%%%%%%%%%%%%%%%%%%%%%%%%%%%%%%%%%%%%%%%%%%%%%%
%%%%%%%%            SECTION END           %%%%%%%%%
%%%%%%%%%%%%%%%%%%%%%%%%%%%%%%%%%%%%%%%%%%%%%%%%%%%
%%%%%%%%%%%%%%%%%%%%%%%%%%%%%%%%%%%%%%%%%%%%%%%%%%%

\section{Open questions}\label{sec:open_questions}

\begin{question}
In the plot in Figure~\ref{fig:plots_of_various_equations} it appears that the maximum and minimum values for the characteristic polynomials lie on a hyperbola. For example, here is a plot of $f_{10}(\lambda)$ and $f_{20}(\lambda)$. 
\begin{center}
\includegraphics[height=1.75in]{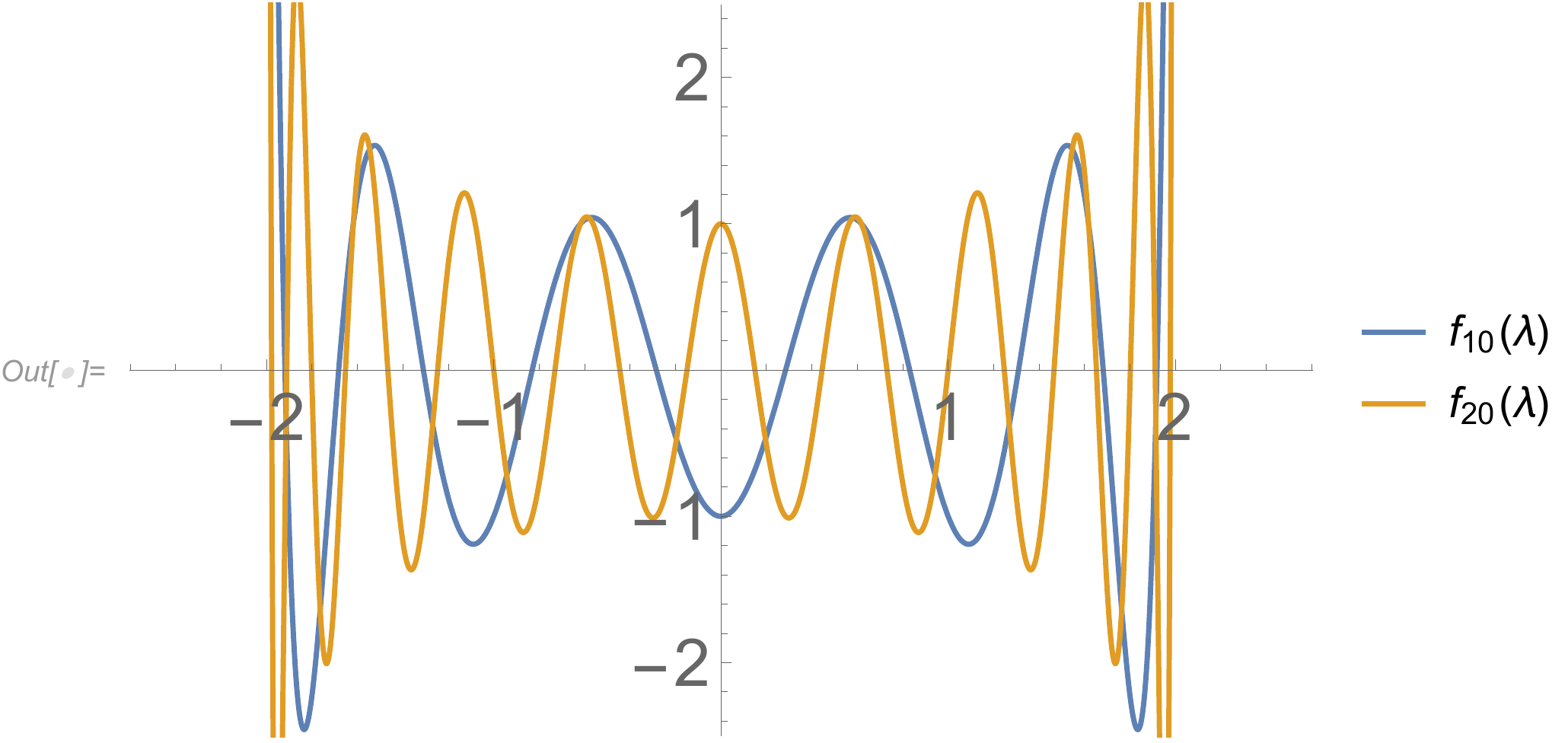}
\end{center}
Do the max and min values of the sequence $\{f_n(\lambda)\}_{n=1}^\infty$  lie on a particular hyperbola? And if so, can we find an exact formula for this hyperbola?
\end{question}

\begin{question}
A natural setting in which to generalize this research is the following family of $n \times n$ matrices
$$
\begin{bmatrix} 
0 & b &  & \\
b & \ddots & \ddots & \\
 & \ddots & \ddots & b\\
 &  & b & 0
\end{bmatrix}
$$
where $b>0$ is some integer. These are the adjacency matrices of the multiple-edged path graphs, that is, the graphs that have $b$ edges between each pair of consecutive vertices. For example, below we give the multiple-edge path graph on 4 vertices with two edges between each vertex, and we give its corresponding adjacency matrix.

\begin{center}
\begin{tikzpicture}
	\fill (-3,0) circle (2pt);
	\node at (-3,0.3) {\footnotesize$v_1$};
	\fill (-1.5,0) circle (2pt);
	\node at (-1.5,0.3) {\footnotesize$v_2$};
	\fill (0,0) circle (2pt);
	\node at (0,0.3) {\footnotesize$v_3$};
	\fill (1.5,0) circle (2pt);
	\node at (1.5,0.3) {\footnotesize$v_4$};
	\draw[bend right] (-3,0) to (-1.5,0);
	\draw[bend right] (-1.5,0) to (0,0);
	\draw[bend right] (0,0) to (1.5,0);
	\draw[bend left] (-3,0) to (-1.5,0);
	\draw[bend left] (-1.5,0) to (0,0);
	\draw[bend left] (0,0) to (1.5,0);
	\node at (2.5,-0.05) {$\longleftrightarrow$};
	\node at (5,0.25) {$\kbordermatrix{
    & v_1 & v_2 & v_3 & v_4 \\
    v_1 & 0 & 2 & 0 & 0 \\
    v_2 & 2 & 0 & 2 & 0 \\
    v_3 & 0 & 2 & 0 & 2 \\
    v_4 & 0 & 0 & 2 & 0  
  }$};
	\end{tikzpicture}	
\end{center}
Do the coefficients of the corresponding characteristic polynomials have any connection to the binomial coefficients as they do in the $b=1$ case? Moreover, is there an analogue of the spectrum containment sufficiency condition as in our Theorem~\ref{thm:spec}?
\end{question}

%%%%%%%%%%%%%%%%%%%%%%%%%%%%%%%%%%%
%%%%%%%%%%%%%%%%%%%%%%%%%%%%%%%%%%%

\subsection{Conjectures on \texorpdfstring{$f_n(\lambda) = F_{n+1}$}{fn(lambda)=Fn+1} where \texorpdfstring{$F_{n+1}$}{Fn+1} is the \texorpdfstring{$(n+1)^{\mathrm{th}}$}{(n+1)th} Fibonacci number}

There is a tantalizing connection between the characteristic polynomials $f_n(\lambda)$ and the Fibonacci sequence $\{F_n\}_{n=0}^\infty = \{0, 1, 1, 2, 3, 5, 8, 13, 21, \ldots\}$. Recall the polynomial $f_{12}(\lambda)$ from Table~\ref{table:char_polys}.
$$f_{12}(\lambda) = \textcolor{black}{1}\lambda ^{12}  - \textcolor{black}{11} \lambda ^{10}  + \textcolor{black}{45} \lambda ^8  - \textcolor{black}{84} \lambda ^6  + \textcolor{black}{70} \lambda ^4  - \textcolor{black}{21} \lambda ^2  + \textcolor{black}{1}.$$
For what $\lambda$ does $f_{12}(\lambda) = F_{13}$? It turns out $\lambda = i$ is a root as follows:
\begin{align*}
    f_{12}(i) &= \textcolor{black}{1} i^{12}  - \textcolor{black}{11} i^{10}  + \textcolor{black}{45} i^8  - \textcolor{black}{84} i^6  + \textcolor{black}{70} i^4  - \textcolor{black}{21} i^2  + \textcolor{black}{1}\\
    &= \textcolor{black}{1} + \textcolor{black}{11} + \textcolor{black}{45} +\textcolor{black}{84} + \textcolor{black}{70} + \textcolor{black}{21} + \textcolor{black}{1}\\
    &= 233 = F_{13}.
\end{align*}
We leave it to the reader to easily prove that we have the following result.
\begin{theorem}
Let $k \in \mathbb{N}$. Then $\lambda = i$ satisfies $f_{4k}(\lambda) = F_{4k+1}$ where $F_n$ denotes the $n^{\mathrm{th}}$ Fibonacci number.
\end{theorem}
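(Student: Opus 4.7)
The plan is to prove the stronger statement that $f_n(i) = (-i)^n F_{n+1}$ for every $n \geq 1$, from which the theorem follows immediately by noting that $(-i)^{4k} = 1$. Generalizing the claim to all $n$ (rather than just multiples of $4$) is essential, because the three-term recurrence couples consecutive indices, so any inductive argument must carry hypotheses about $f_{n-1}(i)$ and $f_{n-2}(i)$ regardless of their residue mod $4$.

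First I would verify the base cases directly against the initial conditions: $f_1(i) = -i = (-i)^1 F_2$ and $f_2(i) = i^2 - 1 = -2 = (-i)^2 F_3$. Next I would run a strong induction using the recurrence from Equation~\eqref{eqn:recurrence_relation} evaluated at $\lambda = i$: assuming $f_{n-1}(i) = (-i)^{n-1} F_n$ and $f_{n-2}(i) = (-i)^{n-2} F_{n-1}$, I compute
\begin{align*}
f_n(i) &= -i\, f_{n-1}(i) - f_{n-2}(i) \\
&= -i \cdot (-i)^{n-1} F_n - (-i)^{n-2} F_{n-1} \\
&= (-i)^n F_n - (-i)^{n-2} F_{n-1}.
\end{align*}
The key algebraic identity is $(-i)^{n-2} = (-i)^n \cdot (-i)^{-2} = -(-i)^n$, since $(-i)^2 = -1$. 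Substituting and using the Fibonacci recurrence $F_n + F_{n-1} = F_{n+1}$ yields $f_n(i) = (-i)^n(F_n + F_{n-1}) = (-i)^n F_{n+1}$, completing the induction. Specializing to $n = 4k$ gives $(-i)^{4k} = 1$ and hence $f_{4k}(i) = F_{4k+1}$.

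As an alternative route, one could apply the closed form from Theorem~\ref{thm:closed_form_satifies_the_recurrence_relation} directly at $\lambda = i$ and $m = 4k$. Since $i^{4k} = 1$ and $i^{-2j} = (-1)^j$, the signs collapse and one obtains
\[
f_{4k}(i) = \sum_{j=0}^{2k} \binom{4k-j}{j},
\]
which equals $F_{4k+1}$ by the classical identity $F_{n+1} = \sum_{j=0}^{\lfloor n/2\rfloor} \binom{n-j}{j}$. I would mention this as a second proof because it makes the Pascal-triangle/Fibonacci connection transparent, but the inductive route above is shorter and does not rely on an external combinatorial identity.

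I do not anticipate a genuine obstacle; the only subtlety is recognizing that the "correct" invariant under the recurrence is $f_n(i)/(-i)^n$, not $f_n(i)$ itself. Once this is spotted, everything reduces to checking that $(-i)^2 = -1$ exactly cancels the sign needed to convert the recurrence $f_n = -\lambda f_{n-1} - f_{n-2}$ into the Fibonacci recurrence $F_{n+1} = F_n + F_{n-1}$.
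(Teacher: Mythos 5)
Your proposal is correct. The paper itself does not write out a proof of this theorem (it is explicitly left to the reader), but the worked example preceding it --- evaluating $f_{12}(i)$ term by term, observing that all signs collapse to $+$, and summing the diagonal of Pascal's triangle to get $233 = F_{13}$ --- is precisely your second route: substitute $\lambda = i$ into the closed form of Theorem~\ref{thm:closed_form_satifies_the_recurrence_relation}, note that $(-1)^{4k+j}\,i^{4k-2j} = (-1)^j(-1)^j = 1$, and invoke the classical identity $F_{n+1} = \sum_{j=0}^{\lfloor n/2\rfloor}\binom{n-j}{j}$. Your primary route is genuinely different and, in a sense, stronger: by identifying $f_n(i)/(-i)^n$ as the invariant carried by the recurrence $f_n = -\lambda f_{n-1} - f_{n-2}$ at $\lambda = i$, you prove $f_n(i) = (-i)^n F_{n+1}$ for \emph{every} $n$, which both specializes to the stated theorem when $4 \mid n$ and explains what happens at the other residues (e.g.\ $f_{4k+2}(i) = -F_{4k+3}$). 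The induction is self-contained, needs only the initial conditions and the Fibonacci recurrence, and avoids the external combinatorial identity; the closed-form route, by contrast, makes the Pascal-triangle connection that motivates the paper's whole section transparent. Both arguments are valid, and the base cases and the key step $(-i)^{n-2} = -(-i)^n$ check out against the paper's conventions ($F_2 = 1$, $F_3 = 2$).
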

However, something more compelling occurs when we graph the roots of $f_n(\lambda) = F_{n+1}$. For example, in the $n=12$ case, we know that $\lambda = i$ is a root. But what about the other 11 roots? Graphing the roots of $f_{12}(\lambda) = F_{13}$ in the complex plane, we obtain the following graph:
\begin{center}
    \includegraphics[width=4in]{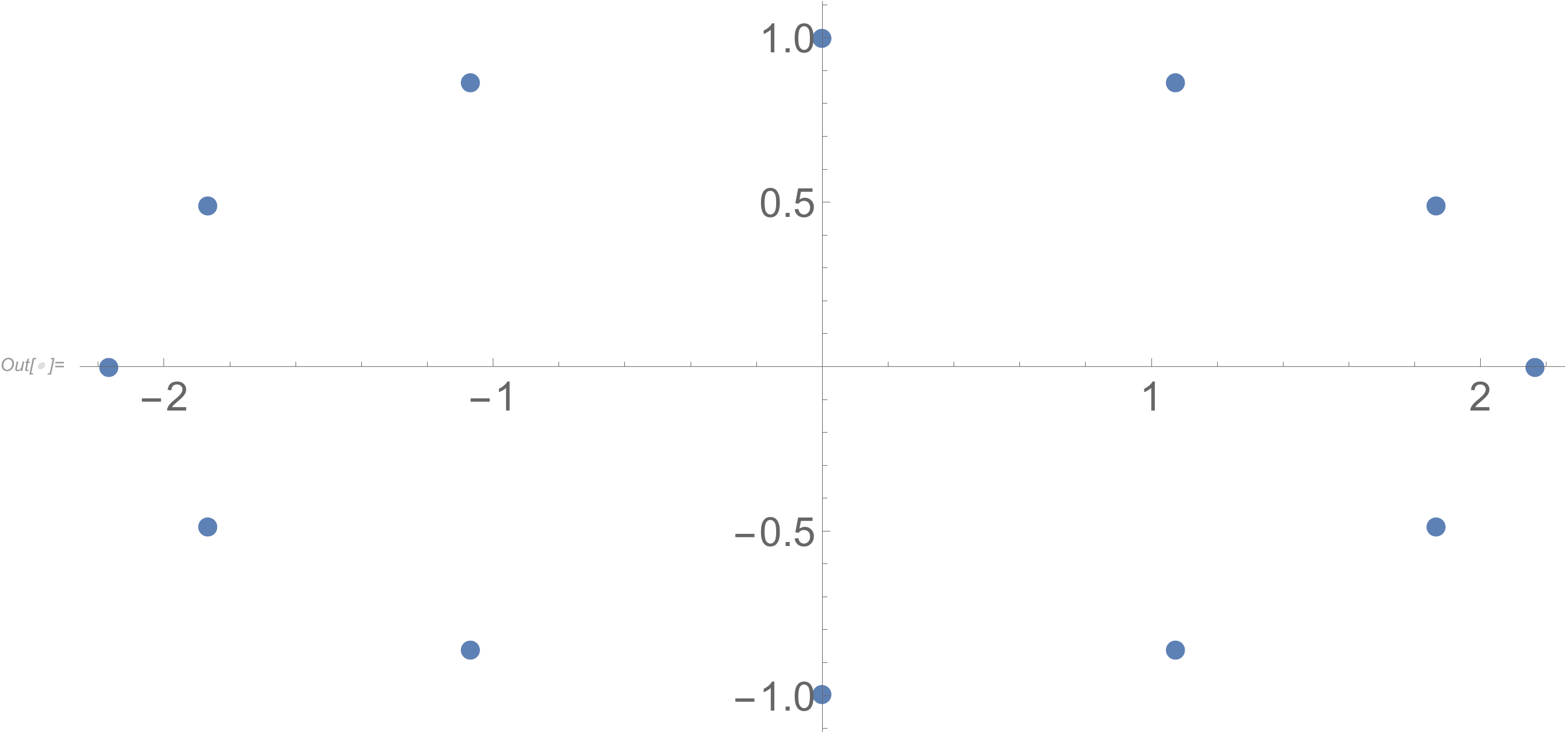}
\end{center}
These roots appear to lie on an ellipse. This is not unique though to the $f_n = F_{n+1}$ where $n$ is a multiple of 4. Consider the roots of $f_{29}(\lambda) = F_{30}$. The following is the polynomial $f_{29}(\lambda)$:
\begin{align*}
    f_{29}(\lambda) =  -\lambda ^{29} & +28 \lambda ^{27}-351 \lambda ^{25}+2600 \lambda ^{23}-12650 \lambda ^{21}+42504 \lambda ^{19}\\
    &-100947 \lambda ^{17}+170544 \lambda ^{15}-203490 \lambda ^{13}+167960 \lambda ^{11}\\
    &-92378 \lambda ^9+31824 \lambda ^7-6188 \lambda ^5+560 \lambda ^3-15 \lambda.
\end{align*}
Graphing the roots of $f_{29}(\lambda) = F_{30}$ in the complex plane, we obtain the following graph:
\begin{center}
    \includegraphics[width=4in]{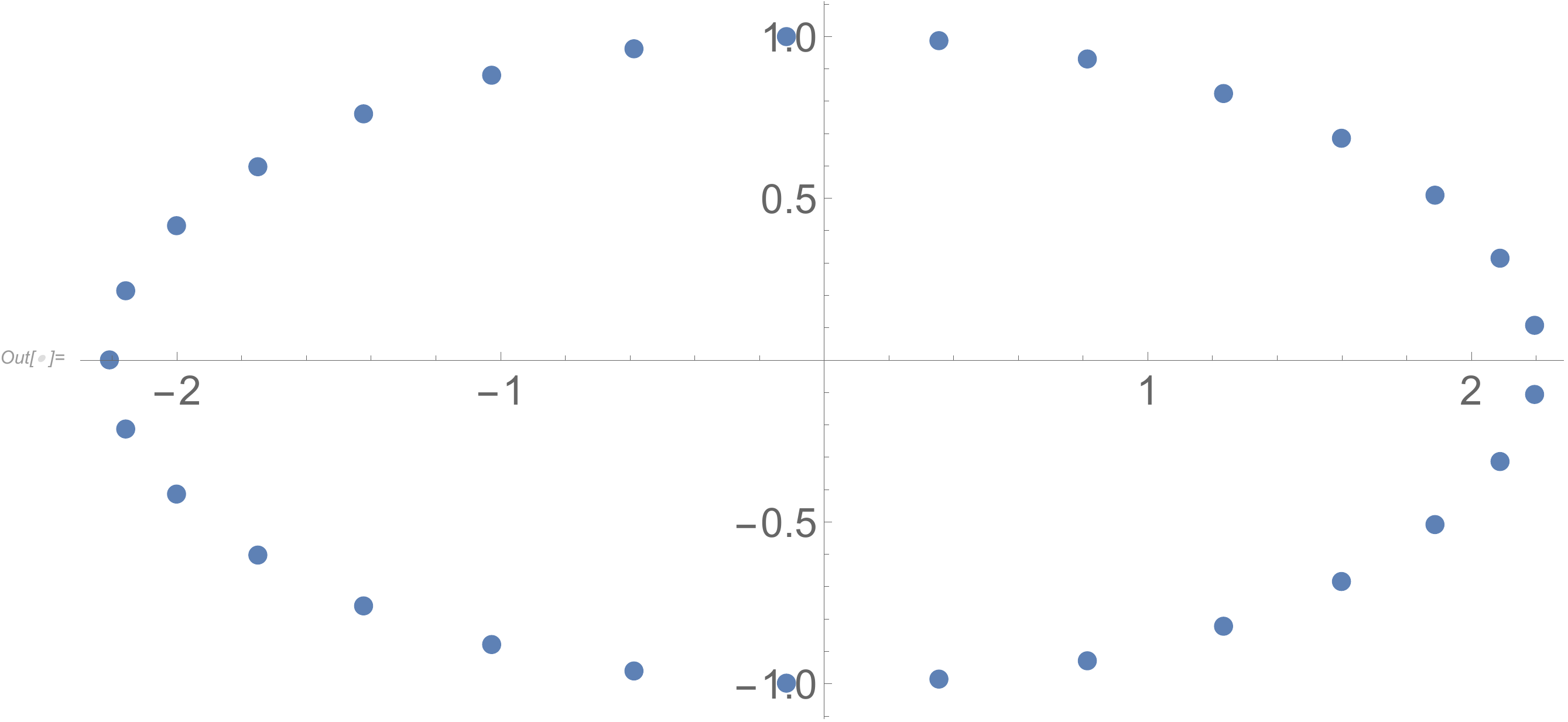}
\end{center}
What if we change a single coefficient? If we change the coefficient of $\lambda^{25}$ in $f_{29}(\lambda)$ from $-351$ to $-350$ (denoting this modified polynomial by $\widetilde{f_{29}}(\lambda)$), and we set the polynomial $\widetilde{f_{29}}(\lambda)$ equal to $F_{30}$, then we get
\begin{align*}
    F_{30} =  -\lambda ^{29} & +28 \lambda ^{27} \textcolor{red}{\mathbf{-350}} \lambda ^{25}+2600 \lambda ^{23}-12650 \lambda ^{21}+42504 \lambda ^{19}\\
    &-100947 \lambda ^{17}+170544 \lambda ^{15}-203490 \lambda ^{13}+167960 \lambda ^{11}\\
    &-92378 \lambda ^9+31824 \lambda ^7-6188 \lambda ^5+560 \lambda ^3-15 \lambda.
\end{align*}
Graphing the roots of $\widetilde{f_{29}}(\lambda) = F_{30}$ in the complex plane, we obtain the following graph:

\begin{center}
    \includegraphics[width=4in]{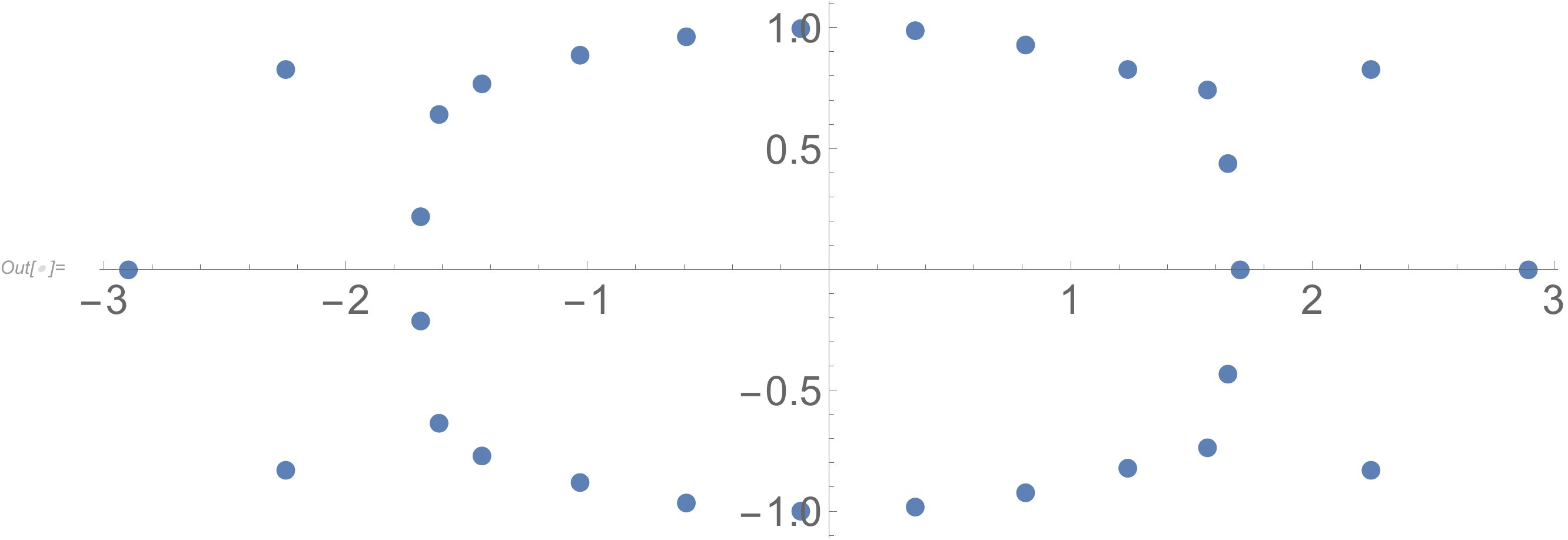}
\end{center}
Observe that we no longer get an ellipse. Clearly there is something special about the roots of $f_{n}(\lambda) = F_{n+1}$. This leads one to the following conjecture.
\begin{conjecture}\label{conj:ellipse}
The roots of $f_{n}(\lambda) = F_{n+1}$ lie on an ellipse.
\end{conjecture}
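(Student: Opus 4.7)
The plan is to recast the equation $f_n(\lambda) = F_{n+1}$ via a Joukowski-style substitution and reduce the ellipse claim to a modulus statement for the roots of an auxiliary polynomial in a new variable $w$. First, recall from Subsection~\ref{subsec:Chebyshev} that $f_n(\lambda) = U_n(-\lambda/2)$. An ellipse centered at the origin and axis-aligned with semi-axes $a \geq b > 0$ admits the parameterization $\{Aw + B/w : |w| = 1\}$ where $A = (a+b)/2$ and $B = (a-b)/2$, and has foci $\pm 2\sqrt{AB}$. The ellipse conjecture thus amounts to the existence of $A, B > 0$ such that every $\lambda$-root of $f_n(\lambda) = F_{n+1}$ admits a representation $\lambda = Aw + B/w$ with $|w| = 1$.

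I would substitute $\lambda = Aw + B/w$ into $f_n(\lambda) - F_{n+1} = 0$ and multiply through by $w^n$, obtaining a polynomial $P_{A,B}(w)$ of degree $2n$. A direct calculation yields the functional equation $P_{A,B}(B/(Aw)) = (B/A)^n w^{-2n} P_{A,B}(w)$, so the roots of $P_{A,B}$ come in companion pairs $\{w_0, B/(Aw_0)\}$ that map to the same $\lambda$ under $w \mapsto Aw + B/w$. Since the product of moduli within each pair is always $B/A$, the conjecture becomes equivalent to: for appropriate $A, B$, every root of $P_{A,B}(w)$ has modulus in $\{1,\, B/A\}$.

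Next, the parameters $A, B$ must be identified. When $n$ is a positive multiple of $4$, the preceding theorem guarantees $\lambda = i$ is a root; combined with the positive real root $\lambda_0$ of $f_n(\lambda) = F_{n+1}$ (which exists since $f_n$ is even with $f_n(2) = n+1 \leq F_{n+1}$ and $f_n(\lambda) \to +\infty$ as $\lambda \to +\infty$), we obtain $\lambda_0 = A + B$ from $w = 1$ and $1 = A - B$ from $w = i$, yielding $A = (\lambda_0 + 1)/2$ and $B = (\lambda_0 - 1)/2$. As a motivating check, I would work out the $n = 4$ case: with $A = 3/2$ and $B = 1/2$, substituting $\lambda = (3w^2 + 1)/(2w)$ into $f_4(\lambda) - F_5 = \lambda^4 - 3\lambda^2 - 4$ and clearing denominators produces $(9w^4 - 10w^2 + 1)(9w^4 + 10w^2 + 1) = 0$, whose roots are $w \in \{\pm 1, \pm 1/3, \pm i, \pm i/3\}$, all with $|w| \in \{1, 1/3\} = \{1, B/A\}$. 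This confirms that the four $\lambda$-roots $\{\pm 2, \pm i\}$ lie on the ellipse $x^2/4 + y^2 = 1$, illustrating the mechanism precisely.

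The main obstacle, and the reason this remains a conjecture, is establishing the modulus structure of $P_{A,B}(w)$ for general $n$: one must show that the $2n$ roots split evenly between the two circles $|w| = 1$ and $|w| = B/A$. A plausible route is to exploit the Binet formula $F_{n+1} = (\phi^{n+1} - \psi^{n+1})/\sqrt{5}$ together with classical identities for $U_n$ to factor $P_{A,B}(w)$ analogously to the $n = 4$ case; for $n$ not divisible by $4$, one would also need a substitute for the role played by $\lambda = i$ (for instance, anchoring $A, B$ on the real root together with a complex-conjugate pair). The paper's own perturbation experiment, showing that changing a single coefficient of $f_{29}$ destroys the ellipse pattern, strongly suggests that the required factorization is a delicate algebraic coincidence hinging on the specific arithmetic of Fibonacci numbers, and this is precisely what makes a general proof elusive.
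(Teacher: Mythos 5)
The paper offers no proof of this statement: Conjecture~\ref{conj:ellipse} is left entirely open, supported only by the plots for $n=12$, $29$, $201$ and the perturbation experiment on $f_{29}$. So there is no argument of the paper's to compare yours against, and your proposal must be judged on its own terms. As a reduction it is sound: the parameterization $\lambda = Aw + B/w$, the functional equation pairing $w_0$ with $B/(Aw_0)$, and the resulting equivalence (for $A \neq B$) between ``every $\lambda$-root lies on the ellipse'' and ``every root of $P_{A,B}(w)$ has modulus $1$ or $B/A$'' are all correct, and your $n=4$ computation checks out since $f_4(\lambda) - F_5 = (\lambda^2-4)(\lambda^2+1)$. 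But beyond $n=4$ nothing is proved: the modulus dichotomy for the roots of $P_{A,B}(w)$, which you defer to a hoped-for factorization via Binet's formula, is exactly the content of the conjecture, so what you have is a reformulation plus one worked example rather than a proof. You say as much yourself, which is the right call.

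Two concrete issues if you pursue this. First, the ansatz $\lambda = Aw + B/w$ builds in that the ellipse is centered at the origin. For even $n$ this is forced, since $f_n$ is an even function by Theorem~\ref{thm:parity_of_char_polys} and hence the root set of $f_n(\lambda) = F_{n+1}$ is closed under both $\lambda \mapsto \overline{\lambda}$ and $\lambda \mapsto -\lambda$; but for odd $n$ the polynomial $f_n$ is odd, the root set is only conjugation-symmetric, and the paper's own data for $n=29$ (a single real root near $-2.208$, with the root of largest imaginary part at real part $\approx -0.119$ rather than $0$) indicates the center sits strictly to the left of the origin. You would need $\lambda = c + Aw + B/w$ with a real shift $c$ and a third anchor to determine it, so the ``substitute for $\lambda = i$'' you mention is not the only modification required. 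Second, your determination of $A$ and $B$ from the roots $\lambda_0$ and $i$ tacitly assumes these are the vertices of the ellipse (i.e., correspond to $w=1$ and $w=i$); that happens to hold at $n=4$, where the four roots $\pm 2, \pm i$ are exactly the vertices, but for general $n \equiv 0 \pmod 4$ it is an unproven assumption, so even the identification of the candidate ellipse is not yet rigorous.
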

Lastly, from the two examples above of the graphs of the roots of $f_{12}(\lambda) = F_{13}$ and $f_{29}(\lambda) = F_{30}$, we observe that for the roots $\lambda = a + bi$, the imaginary parts $b$ in absolute value appear to be bounded above by 1, while the real parts $a$ in absolute value increased slightly (in absolute value) from the value $a \approx 2.16648$ in the $n=12$ case to the value $a \approx -2.20796$ in the $n=29$ case. Does this pattern continue to hold as we increase the $n$ value in $f_{n}(\lambda) = F_{n+1}$? Here are the roots of $f_{201}(\lambda) = F_{202}$ with a perfect ellipse fitting the 201 distinct roots!
\begin{center}
    \includegraphics[width=5.5in]{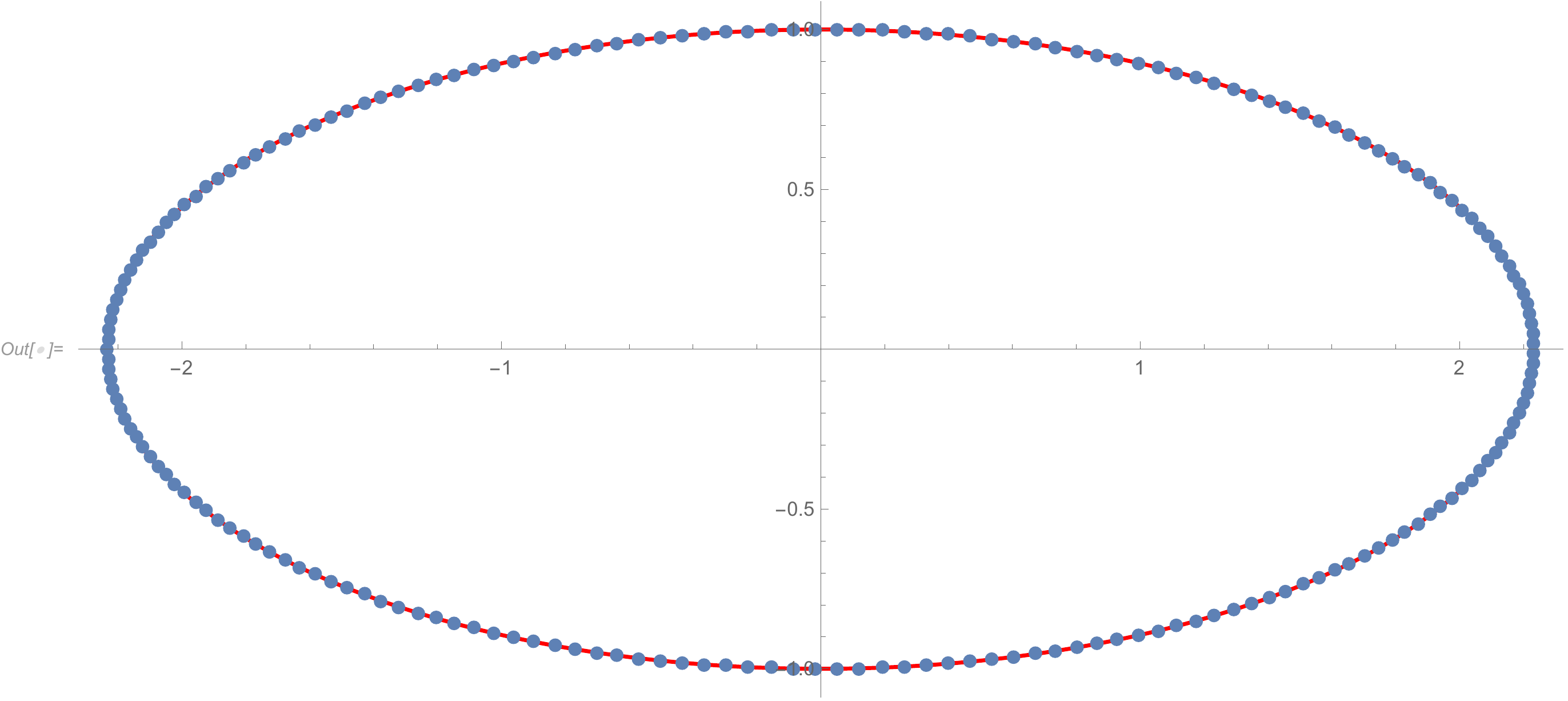}
\end{center}
In the above example, there is exactly one real root and it has value approximately $-2.23206$. Observe that this is larger in absolute value than the one real root in the $n=29$ case, which is approximately $-2.20796$. Moreover, the root with the largest imaginary part in the $n=201$ case is approximately (up to 10 decimal places) $-0.0174265601 + 0.9999693615 \, i$. Observe the imaginary part does not exceed 1 but is larger than the imaginary part in the $n=29$ case, whose root with largest imaginary part is approximately (up to 10 decimal places) $-0.118795230904 + 0.9985023199 \, i$. From further compelling evidence via \texttt{Mathematica} on a large number of concrete examples, we offer the following conjecture.
\begin{conjecture}
Consider the roots $a+bi$ of the equation $f_{n}(\lambda) = F_{n+1}$. Then the following hold:
\begin{itemize}
\item If $n$ is even, then there are two distinct real roots and $n-2$ distinct complex roots occurring in $\frac{n-2}{2}$ conjugate pairs.
\item If $n$ is odd, then there is one real root with negative parity and $n-1$ distinct complex roots occurring in $\frac{n-1}{2}$ conjugate pairs.
\item The real parts $\Re(a+bi)$ are unbounded as $n$ increases.
\item The imaginary parts $\Im(a+bi)$ are bounded above by $1$ and bounded below by $-1$, as $n$ increases.
\end{itemize}
\end{conjecture}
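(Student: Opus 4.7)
The unifying tool is the Chebyshev identification from Subsection~\ref{subsec:Chebyshev}: writing $\lambda = -2\cos\theta$ for $\theta \in \mathbb{C} \setminus \pi\mathbb{Z}$ yields $f_n(\lambda) = \sin((n+1)\theta)/\sin\theta$, so the equation $f_n(\lambda) = F_{n+1}$ becomes $\sin((n+1)\theta) = F_{n+1}\sin\theta$. Setting $\mu = \ln\phi$, the golden-ratio identities $\phi - \phi^{-1} = 1$ and $\phi + \phi^{-1} = \sqrt{5}$ give $2\sinh\mu = 1$ and $2\cosh\mu = \sqrt{5}$, and Binet's formula rewrites $F_{n+1}$ as $\cosh((n+1)\mu)/\cosh\mu$ when $n$ is even and $\sinh((n+1)\mu)/\cosh\mu$ when $n$ is odd. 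All four bullets will be attacked through this single substitution.

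For the counting statements (first two bullets), I would restrict to contours parametrizing the real axis: $\theta \in [0,\pi]$ for $\lambda \in [-2,2]$; $\theta = it$, $t > 0$, for $\lambda \in (-\infty,-2)$; and $\theta = \pi + it$, $t > 0$, for $\lambda \in (2,\infty)$. On the unbounded real intervals, direct computation gives $f_n(2\cosh t) = (-1)^n\sinh((n+1)t)/\sinh t$ and $f_n(-2\cosh t) = \sinh((n+1)t)/\sinh t$, each strictly monotone in $t > 0$. Combined with the crude bound $|f_n|\leq n+1$ on $[-2,2]$ (dominated by $F_{n+1}\sim \phi^{n+1}/\sqrt{5}$ for $n$ sufficiently large) and the parity symmetry of Theorem~\ref{thm:parity_of_char_polys}, this forces exactly two real roots symmetric about $0$ when $n$ is even and exactly one negative real root when $n$ is odd; the remaining $n-2$ or $n-1$ roots, being non-real with real coefficients, split into the stated number of complex-conjugate pairs.

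For the imaginary-part bound (fourth bullet), I would consider the family of confocal ellipses $E_t = \{-2\cos(x+it) : x \in \mathbb{R}\}$, whose semi-minor axis is $2\sinh t$. A direct computation gives
$$|f_n(\lambda)|^2 \;=\; \frac{\sinh^2((n+1)t) + \sin^2((n+1)x)}{\sinh^2 t + \sin^2 x}$$
on $E_t$, so $|f_n| \geq \sinh((n+1)t)/\cosh t$ there. The inequality $\sinh((n+1)t)/\cosh t > F_{n+1}$ for $t > \mu$ reduces via the Binet expression above to the elementary comparison $e^{(n+1)(t-\mu)} > \cosh t/\cosh\mu$, which holds for $n$ sufficiently large. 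A deformation argument (decreasing $t$ from $+\infty$ toward $\mu$, during which no root can cross $E_t$) then confines every root to the closed region bounded by $E_\mu$, giving $|\Im(\lambda)| \leq 2\sinh\mu = 1$.

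The third bullet is where I expect the principal obstacle. The natural approach would be to exhibit, for each $M > 0$, an $n$ and a root of $f_n = F_{n+1}$ with $|\Re(\lambda)| > M$, and the only plausible candidates are the extreme real roots produced in the second paragraph. Solving $\sinh((n+1)y_n)/\sinh y_n = \pm F_{n+1}$ asymptotically yields $y_n \to \mu$ and hence $|\Re(\lambda_n)| \to \sqrt{5}^{-}$. More tellingly, the decomposition $\lambda = -2\cos(x+it)$ combined with the fourth bullet's bound $|t|\leq \mu$ gives $|\Re(\lambda)| = 2|\cos x|\cosh t \leq 2\cosh\mu = \sqrt{5}$ for \emph{every} root of \emph{every} $f_n = F_{n+1}$. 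Any proof of literal unboundedness must therefore introduce ingredients outside this hyperbolic framework; I anticipate this will require clarifying the intended reading of ``unbounded'' (the numerics for $n = 12, 29, 201$ strongly suggest the weaker statement that $\sqrt{5}$ is an unattained supremum approached as $n \to \infty$), and this tension is the principal challenge I foresee in carrying the conjecture through as stated.
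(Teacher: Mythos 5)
This statement is one of the paper's open conjectures; the paper offers no proof of it, so there is no in-paper argument to compare yours against, and what follows is an assessment of your attempt on its own terms. Your framework is sound and the computations check out: $f_n(-2\cos\theta)=\sin((n+1)\theta)/\sin\theta$ follows from the identification $f_n(\lambda)=U_n(-\lambda/2)$ of Subsection~\ref{subsec:Chebyshev}, the Binet rewriting of $F_{n+1}$ in terms of $\mu=\ln\phi$ is correct, and the modulus formula on the confocal ellipses $E_t$ is the standard consequence of $|\sin(x+it)|^2=\sin^2x+\sinh^2t$. The root count in the first two bullets does follow for $n$ large from the monotonicity of $\sinh((n+1)t)/\sinh t$ together with $|f_n|\le n+1$ on $[-2,2]$ versus $F_{n+1}\sim\phi^{n+1}/\sqrt{5}$, but you should (i) treat separately the small $n$ with $n+1\ge F_{n+1}$ (e.g.\ for $n=2$ both real roots $\pm\sqrt{3}$ lie inside $[-2,2]$, so your localization fails there even though the count is still right), and (ii) actually justify ``distinct'' for the non-real roots --- this can be patched by noting that all $n-1$ critical points of $f_n$ are real with critical values of absolute value at most $n+1<F_{n+1}$, so $f_n-F_{n+1}$ shares no root with its derivative.

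Your most valuable observation is that the third bullet is false as literally stated: your ellipse estimate confines every root to the closed region bounded by $E_\mu$, whose semi-axes are $2\cosh\mu=\sqrt{5}$ and $2\sinh\mu=1$, so $|\Re(\lambda)|\le\sqrt{5}$ for every root of every $f_n(\lambda)=F_{n+1}$ and the real parts cannot be unbounded. This is consistent with the paper's own numerics ($2.16648$, $2.20796$, $2.23206$ increasing toward $\sqrt{5}\approx 2.23607$), and the same computation would resolve Conjecture~\ref{conj:ellipse} affirmatively by identifying the ellipse explicitly; the correct reading of the third bullet is almost certainly that the real parts increase with supremum $\sqrt{5}$, not that they are unbounded. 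Two remaining gaps: first, for $n$ even one has $F_{n+1}=\cosh((n+1)\mu)/\cosh\mu>\sinh((n+1)\mu)/\cosh\mu$, so your inequality $\sinh((n+1)t)/\cosh t>F_{n+1}$ fails at $t=\mu$ and only holds for $t\ge\mu+\epsilon_n$ with $\epsilon_n\to 0$; as written you obtain $|\Im(\lambda)|\le 1+o(1)$ rather than the asserted bound $1$, and closing this requires exploiting the neglected $\sin^2((n+1)x)$ term (a point with $|\Im(\lambda)|>1$ on $E_t$ with $t$ barely above $\mu$ forces $x$ near $\pm\pi/2$) or some other refinement. Second, the ``deformation'' language is unnecessary: the confocal ellipses foliate the exterior of $[-2,2]$, so the pointwise lower bound on each $E_t$ already excludes roots there without moving anything.
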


\section{Acknowledgments}
Among the many locations in Eau Claire, WI, where this research was done, the authors thank The Plus where they conducted research while enjoying pizza. They also thank the grassy fields of Phoenix Park where author Gullerud came up with the proof of the $\spec$ containments.

%%%%%%%%%%%%%%%%%%%%%%%%%%%%%%%%%%%%%%%%%%%%%%%%%%%
%%%%%%%%%%%%%%%%%%%%%%%%%%%%%%%%%%%%%%%%%%%%%%%%%%%
%%%%%%%%            BIBLIOGRAPHY          %%%%%%%%%
%%%%%%%%%%%%%%%%%%%%%%%%%%%%%%%%%%%%%%%%%%%%%%%%%%%
%%%%%%%%%%%%%%%%%%%%%%%%%%%%%%%%%%%%%%%%%%%%%%%%%%%

%%%%%%%%%%%%%%%%%%%%%%%%%%%%%%%%%%%%%
%%%%%%%%%%%%%%%%%%%%%%%%%%%%%%%%%%%%%
%%%%%%%%%     APPENDIX      %%%%%%%%%
%%%%%%%%%%%%%%%%%%%%%%%%%%%%%%%%%%%%%
%%%%%%%%%%%%%%%%%%%%%%%%%%%%%%%%%%%%%

\newpage

\begin{appendices}
\renewcommand{\thesection}{A}
\section*{Appendix}
%\tocless %%% Comment out this line to list appendix subsections in the Table of Contents

\subsection{The research team}
The research team for this project are Emily Gullerud, Rita Johnson, and Dr.~aBa Mbirika. This research was done in 2017--2018 at the University of Wisconsin-Eau Claire (UWEC). Emily is currently completing a PhD program in mathematics at the University of Minnesota. Rita is currently a software engineer in Utah. Dr.~aBa continues to happily teach mathematics and joyously conduct research at UWEC.

\begin{center}
\includegraphics[height=4in]{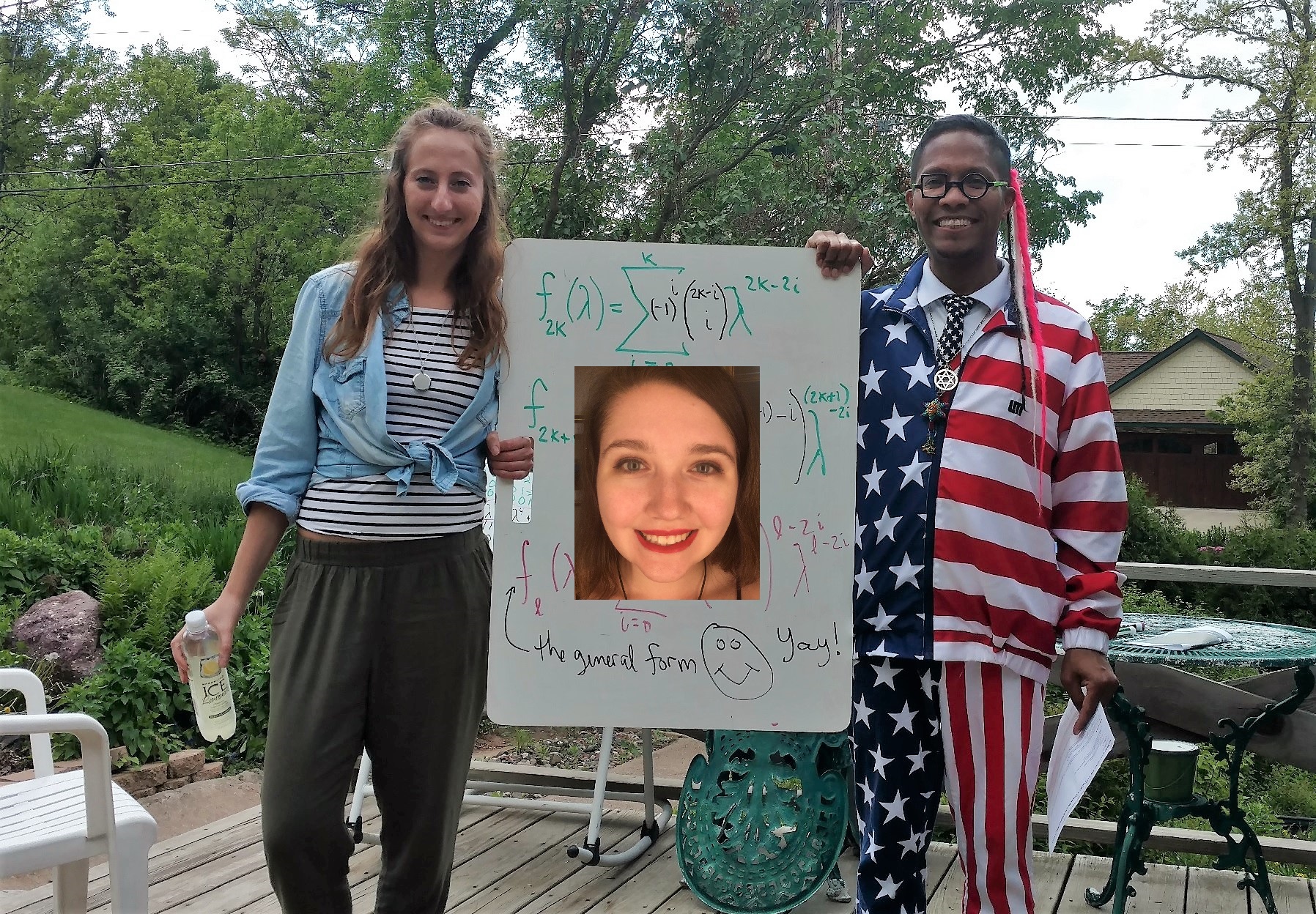}
\end{center}

\vspace{-.35in}

\begin{center}
\textcolor{blue}{\scriptsize Left to right are authors Rita Johnson, Emily Gullerud's head, and aBa Mbirika.\footnote{The authors thank the hospitality of author Rita's uncles Rick and Mike whose porch provided a stimulating environment on Memorial Day in 2017 for us to prove that Equations~\eqref{eqn:even_case} and \eqref{eqn:odd_case}, shown on the whiteboard in the picture above, satisfy the three-term recurrence relation in Theorem~\ref{thm:closed_form_satifies_the_recurrence_relation}.}}
\end{center}
\end{appendices}

\end{document}